\theoremstyle{plain}
\newtheorem*{PropA}{Proposition A}
\newtheorem*{PropB}{Proposition B}
\newtheorem*{ThmA}{Theorem A}
\newtheorem{Lemma}{Lemma}
\numberwithin{Lemma}{section}
\numberwithin{equation}{section}
\newtheorem{Proposition}[Lemma]{Proposition}
\newtheorem{Corollary}[Lemma]{Corollary}
\theoremstyle{definition}
\newtheorem{Remark}[Lemma]{Remark}
\newcommand{\N}{\mathbb{N}}
\newcommand{\Z}{\mathbb{Z}}
\newcommand{\Q}{\mathbb{Q}}
\newcommand{\C}{\mathbb{C}}
\newcommand{\F}{\mathbb{F}}
\newcommand{\Pbb}{\mathbb{P}}
\newcommand{\G}{\mathbb{G}}
\DeclareMathOperator{\res}{res}
\DeclareMathOperator{\tr}{tr}
\DeclareMathOperator{\codim}{codim}
\DeclareMathOperator{\Tor}{Tor}
\DeclareMathOperator{\Sym}{Sym}
\DeclareMathOperator{\GL}{GL}
\DeclareMathOperator{\Sp}{Sp}
\DeclareMathOperator{\SL}{SL}
\DeclareMathOperator{\Oo}{O}
\DeclareMathOperator{\SO}{SO}
\DeclareMathOperator{\Spec}{Spec}
\newcounter{listcounter}
\newskip{\itemsepamount}
\newskip{\topsepamount}
\newenvironment{equivlist}{%
  \begin{list}
    {\upshape (\roman{listcounter})}
    {\setlength{\leftmargin}{23pt}
     \setlength{\rightmargin}{0pt}
     \setlength{\itemindent}{0pt}
     \setlength{\labelsep}{5pt}
     \setlength{\labelwidth}{18pt}
     \setlength{\listparindent}{\parindent}
     \setlength{\parsep}{0pt}
     \setlength{\itemsep}{\itemsepamount}
     \setlength{\topsep}{\topsepamount}
     \usecounter{listcounter}}}
  {\end{list}}
\begin{document}

\title{On the Chow Ring of the Classifying Space of some Chevalley Groups}
  \author{Dennis Brokemper}
  \date{November 2016}
  \maketitle
\begin{abstract}
Let $p$ be a prime and $q$ be a power of $p$. We compute the Chow ring of the classifying space of some Chevalley groups 
$G(\F_q)$, when considered as a finite algebraic group over a field of characteristic $p$ containing $\F_q$.
Using specialization from characteristic $0$ to characteristic $p$ we also obtain results over
the complex numbers. 
\end{abstract}

\section*{Introduction}
\addcontentsline{toc}{section}{Introduction}
In this article we are interested in the Chow ring $A^* BG$ of the classifying space $BG$ of a linear algebraic group $G$.
In \cite{To} Totaro shows that this ring coincides with the Chow ring 
of the quotient variety $U/G$ in a suitable range of dimensions, where $U$ is open in a representation $V$ of $G$, such that $G$ acts freely on $U$.
Using this Totaro determines $A^*BG$ for the classical groups. He also treats the case of some finite abstract groups, including the symmetric groups. 

We are especially interested in the case of Chevalley groups, i.e. the finite groups $G(\F_q)$, where $G$ is a connected split 
reductive group scheme over $\Z$. To state our main result we use the following notation
\begin{align*}
 R_1(n,q) & =\Z[c_1,\ldots,c_n]/((q-1)c_1,(q^2-1)c_2,\ldots,(q^n-1)c_n), \\
 R_2(m,q) & =\Z[c_2,c_4,\ldots,c_{2m}]/((q^2-1)c_2,(q^4-1)c_4,\ldots,(q^{2m}-1)c_{2m}), \\
 R_3(n,q) & =\Z[c_2,c_3,\ldots,c_n]/((q^2-1)c_2,(q^3-1)c_3,\ldots,(q^n-1)c_n).
\end{align*}

Moreover, the subscript $\C$ after a finite group means that we consider this group as a finite algebraic group
over $\C$.

\begin{ThmA}
Let $S$ be the product of $p$ and the primes that divide $q-1$. Then the following equations hold.
\begin{align*}
 A^*B(\GL_n(\F_q)_{\C})_{2p} & = R_1(n,q)_{2p} \\
 A^*B(\Sp_{2m}(\F_q)_{\C})_{2p} &= R_2(m,q)_{2p}, \\
 A^*B(\SL_n(\F_q)_{\C})_S &=R_3(n,q)_S,
\end{align*}
where $c_i$ denotes the $i$-th Chern class of the Brauer lift of the canonical representation of the respective groups. \\
If $q \equiv 1$ mod $4$ it suffices to invert $p$ in the first and second equation. 
\end{ThmA}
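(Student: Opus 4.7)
The plan is to bootstrap Theorem A from an analogous computation in characteristic $p$ (presumably the content of Propositions A and B mentioned above) by a specialization argument from $\mathrm{char}\, 0$ to $\mathrm{char}\, p$. First I would construct the ring homomorphism $R_i(n,q) \to A^*B(G(\F_q)_\C)$. The canonical $\F_q$-representation $V$ of $G(\F_q)$ admits a Brauer lift $V^{\mathrm{br}}$, a virtual complex representation of $G(\F_q)_\C$, whose Chern classes define the candidate images $c_j := c_j(V^{\mathrm{br}})$. The key computation is that on $G(\F_q)$ the Frobenius twist $V^{(q)}$ is isomorphic to $V$, and under the Brauer lift this identification rescales the $j$-th Chern class by $q^j$; hence $(q^j - 1)c_j = 0$ in $A^jB(G(\F_q)_\C)$, which is exactly the defining relation of $R_i(n,q)$, so the map is well-defined.

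Next I would invoke specialization. Since $G(\F_q)$ descends to $\Z$, there is a well-behaved specialization map $A^*B(G(\F_q)_\C) \to A^*B(G(\F_q)_k)$, for $k$ a field of characteristic $p$ containing $\F_q$, compatible with Brauer lifts (which in characteristic $p$ reduce to the canonical representation itself). This fits into a commutative triangle
\begin{equation*}
\xymatrix{
R_i(n,q) \ar[r] \ar[dr] & A^*B(G(\F_q)_\C) \ar[d] \\
& A^*B(G(\F_q)_k)
}
\end{equation*}
whose diagonal is the isomorphism supplied by the characteristic $p$ computation. In particular the horizontal map is split injective after post-composition with specialization, and the task reduces to showing that, after the appropriate localization, both specialization and the horizontal map become surjections.

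The main obstacle, I expect, will be controlling the torsion in $A^*B(G(\F_q)_\C)$ away from the inverted primes, so that specialization becomes an isomorphism after localization. The presence of the prime $2$ in the inverted set in the $\GL$ and $\Sp$ statements—and its removal when $q \equiv 1 \bmod 4$—points to contributions from an orthogonal (or symplectic) real structure on the Brauer lift that must be handled carefully, presumably by an analysis of the Frobenius-invariant part of the representation ring together with a Bockstein-type argument at the prime $2$. This $2$-primary torsion analysis, more than the formal specialization step, is where the technical work of the proof will concentrate; once it is carried out, the three cases of Theorem A follow uniformly by comparing the map to its characteristic $p$ counterpart.
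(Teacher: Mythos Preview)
Your overall architecture is right: compute $A^*BG(\F_q)_k$ in characteristic $p$ via Proposition~A, then compare with $\C$ through the specialization map, using Brauer lifts to match Chern classes. You also correctly locate the crux: one must show that specialization $\sigma\colon A^*BG(\F_q)_\C \to A^*BG(\F_q)_{\bar\F_p}$ becomes an isomorphism after the stated localization. (A small logical slip: since the composite $R_i \to A^*_\C \to A^*_k$ is an isomorphism, surjectivity of $\sigma$ is automatic; what you actually need is \emph{injectivity} of $\sigma$, and your later paragraph acknowledges this.)

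The gap is in how you propose to prove this injectivity. The suggestion of analyzing real or symplectic structures on the Brauer lift, the Frobenius-invariant part of the representation ring, or a Bockstein argument does not lead anywhere here; these tools give no control over classes in $A^*BG_\C$ that are not detected by Chern classes. The paper's argument is entirely different. One first reduces, via transfer, to showing that $\sigma$ is injective on $A^*BP_\C\otimes\Z_{(l)}$ for each $l$-Sylow subgroup $P$ of $G(\F_q)$ with $l$ not among the inverted primes. Weir's theorem identifies these Sylow subgroups as products of iterated wreath products $(\Z/l)^{\wr i}\wr(\Z/l^b)$; the restriction $l\neq 2$ in Weir's theorem is precisely why $2$ must be inverted in general, and Quillen's observation that the same structure persists for $l=2$ when $q\equiv 1\bmod 4$ is why that hypothesis allows one to invert only $p$. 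Finally, injectivity of specialization for such wreath products is proved by an inductive argument using Totaro's cyclic-product operations together with the cycle map into $l$-adic \'etale cohomology, transporting Totaro's Borel--Moore argument from $\C$ to characteristic $p$. None of this is visible in your sketch, and it constitutes the substantive content of the proof; without it the injectivity of $\sigma$ remains an assertion.
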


In \cite{Gu} Guillot computes the mod $l$ Chow ring of the classifying space of $\GL_n(\F_q)_{\C}$ for odd primes $l$ different from $p$. 
If $\F_q$ contains the $l^b$-th roots of unity for some integer $b$ he also computes the mod $l^b$ Chow ring. His results are
$$
A^* B(\GL_n(\F_q)_{\C})/l=\Z/l[c_r,c_{2r},\ldots,c_{mr}],
$$
where $r=[\F_q(\mu_l):\F_q]$ and $m=\lfloor n/r \rfloor$, and
$$
A^* B(\GL_n(\F_q)_{\C})/\l^b=\Z/l^b[c_1,\ldots,c_n]
$$
in the case $\mu_{l^b} \subset \F_q$. This is consistent with our result since $i=r,\ldots,mr$ are precisely the indices where $q^i-1$ is divisible by $l$.

Let us sketch the proof of Theorem A. Instead of computing $A^*BG(\F_q)_{\C}$ directly we first consider $G(\F_q)$ as a finite algebraic group over
$\bar{\F}_p$. As it turns out the computation is much simpler in this case, because
we have another presentation of the stack $BG(\F_q)_{\bar{\F}_p}$. Namely,
it follows from a theorem of Lang-Steinberg that this stack is canonical isomorphic to the quotient stack
$[G_{\bar{\F}_p}/G_{\bar{\F}_p}]$, where the action is given by conjugation with the $q$-th power Frobenius. If $G$ is special, i.e. every $G$-torsor 
is locally trivial for the Zariski toplogy, we can give a complete description of $A^*BG(\F_q)_{\bar{\F}_p}$. This is a consequence of the following Proposition. 

\begin{PropA}
Let $G$ be a connected split reductive group over $\F_q$ with split maximal torus $T$
and let $\sigma \colon G \to G$ be the Frobenius isogeny.
We write $S=\Sym(\hat{T})$, where $\hat{T}$ is the character group of $T$, and $S_+$ for the ideal in $S$ generated by homogeneous elements of positive degree. 
We have a natural action of $\sigma$ on $S$, that we will also denote by $\sigma$. \\
Consider the action of $G$ on itself by $\sigma$-conjugation. 
If $G$ is special we have
$$
 A^*_G(G)=S^{W_G}/(f-\sigma f \mid f \in S_+^{W_G}),
$$
where $W_G=W(G,T)$ denotes the Weyl group of $G$
\end{PropA}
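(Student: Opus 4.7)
The plan is to realize the quotient stack $[G/G]^{\sigma\text{-conj}}$ as a $2$-fiber product of classifying stacks and then compute its Chow ring by a base-change argument, which is tractable because $G$ is special.

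First I identify the stack. The $(G \times G)$-action on $G$ given by $(g_1,g_2) \cdot x = g_1 x g_2^{-1}$ is transitive with stabilizer the diagonal copy of $G$, so $[G/(G \times G)] \simeq BG$ with structure map to $B(G\times G)$ the classifying map $B\Delta$ of the diagonal $G \to G \times G$. Pulling back the $(G\times G)$-action along the graph $\Gamma_\sigma \colon G \to G \times G$, $g \mapsto (g,\sigma(g))$, produces exactly the $\sigma$-conjugation action of $G$ on $G$. Translating this into quotient stacks yields the $2$-cartesian square
\[
 [G/G]^{\sigma\text{-conj}} \;\simeq\; BG \times_{B(G\times G)} BG,
\]
with legs $B\Delta$ and $B\Gamma_\sigma = (B\id, B\sigma)$.

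Since $G$ is special, a theorem of Totaro gives $A^*BG = S^{W_G}$ and, via the standard approximation of $BG$ by quotients $(V \setminus W)/G$ of smooth quasi-projective varieties with free $G$-action, also $A^*B(G\times G) = A^*BG \otimes_{\Z} A^*BG$. Flat base change applied to the cartesian square above then yields
\[
 A^*_G(G) \;=\; A^*BG \otimes_{A^*BG \otimes A^*BG} A^*BG,
\]
where the left tensor factor is an $A^*B(G\times G)$-module via $B\Delta^*$ (acting as $a\otimes b \mapsto ab$) and the right factor via $B\Gamma_\sigma^*$ (acting as $a\otimes b \mapsto a\cdot\sigma(b)$, since $\sigma^*$ on $S^{W_G}$ is by definition the action induced by the Frobenius). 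A short chase identifies the relations coming from $a\otimes 1$ with the collapse $A^*BG \otimes_{A^*BG} A^*BG = A^*BG$, after which the residual relations coming from $1 \otimes b$ are $b = \sigma(b)$ for every $b \in A^*BG$. Since $\sigma$ is trivial on $\Z$, this is exactly the quotient by the ideal generated by $f - \sigma(f)$ with $f \in S^{W_G}_+$, giving the stated presentation.

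The main obstacle is justifying the Künneth and flat base-change steps, namely that Chow groups of the classifying stacks involved behave as for schemes. This is precisely where specialness is essential, since one then reduces everything to products and fiber products of the smooth quasi-projective $G$-schemes approximating $BG$, where both Künneth and flat base change for Chow groups hold integrally in the standard way. A secondary point to verify is the explicit form of $B\Gamma_\sigma^*$: this follows from the fact that pullback along $\Gamma_\sigma$ factors as $(B\id)^* \otimes (B\sigma)^*$ followed by multiplication, so $a \otimes b$ is sent to $a \cdot \sigma(b)$ as claimed.
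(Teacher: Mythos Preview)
Your identification of $[G/G]^{\sigma\text{-conj}}$ as the $2$-fiber product $BG \times_{B(G\times G)} BG$ via $(B\Delta, B\Gamma_\sigma)$ is correct, and the tensor-product formula you write down does in fact equal $S^{W_G}/(f-\sigma f)$. The gap is that there is no ``flat base change'' theorem for Chow groups that would yield
\[
A^*\bigl(BG \times_{B(G\times G)} BG\bigr) \;=\; A^*BG \otimes_{A^*B(G\times G)} A^*BG.
\]
Your proposed justification---reducing to smooth quasi-projective approximations ``where both K\"unneth and flat base change for Chow groups hold integrally in the standard way''---is not valid: neither K\"unneth nor a base-change isomorphism holds for Chow groups of general smooth quasi-projective varieties. (The K\"unneth identification $A^*B(G\times G)=A^*BG\otimes A^*BG$ is fine, since it follows directly from $A^*BH=\Sym(\hat T_H)^{W_H}$ applied to $H=G\times G$; it is the base-change step that fails.) Note also that the fiber of $B\Delta$ is the affine variety $G$, so no Leray--Hirsch or projective-bundle argument is available either, and $A^*BG$ is not flat over $A^*B(G\times G)$ via the multiplication map.

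The paper circumvents this by first computing at the torus level. One observes that $[G/T]\to[G/(T\times T)]$ is a $T$-torsor, so Totaro's formula for principal torus bundles gives $A^*_T(G)$ as a quotient of $A^*_{T\times T}(G)$ by the characters of $T$ (acting via $\chi\mapsto \sigma\chi\otimes 1-1\otimes\chi$). The latter ring is identified with $A^*_T(G/B)$ by homotopy invariance, and Brion's theorem---this is where specialness enters---computes $A^*_T(G/B)=S\otimes_{S^{W_G}}S$. This yields $A^*_T(G)=S/(f-\sigma f\mid f\in S_+^{W_G})$. Finally one descends from $T$ to $G$: specialness (via Edidin--Graham) gives that $S$ is free of rank $|W_G|$ over $S^{W_G}$ and that $A^*_T(G)$ is free of rank $|W_G|$ over $A^*_G(G)$, and a short rank argument forces $A^*_G(G)=S^{W_G}/(f-\sigma f)$. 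In other words, specialness is used not to get a base-change isomorphism on Chow groups, but to access the concrete module-theoretic results of Brion and Edidin--Graham at the $T$- and $G/B$-levels.
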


We will prove a more general assertion in Proposition \ref{PropChowConj}.
Applying Proposition A to the special groups $G=\GL_n, \Sp_{2m}$ and $\SL_n$ we obtain
$A^*B\GL_n(\F_q)_k=R_1(n,q)$, $A^*B\Sp_{2m}(\F_q)_k=R_2(m,q)$ and $A^*B\SL_n(\F_q)_k=R_3(n,q)$ for any field $k$ containing $\F_q$. 

We then investigate injectivity and surjectivity of the specialization map
$$
\sigma \colon A^*BG(\F_q)_{\C} \to A^*BG(\F_q)_{\bar{\F}_p}.
$$
to obtain results over $\C$. 

\begin{PropB}
\begin{equivlist}
 \item  For the classical groups $G=\GL_n(\F_q)$, $\Sp_{2m}(\F_q)$, $\Oo_n(\F_q)$ and 
 $\SO_n(\F_q)$ over some finite field $\F_q$ of characteristic $p$ the specialization map $A^*BG_{\C} \to A^*BG_{\bar{\F}_p}$ becomes injective after inverting $2p$.
 \item If $q \equiv 1$ mod $4$ the specialization map for $\GL_n(\F_q)$, $\Sp_{2m}(\F_q)$ and $\Oo_{2m+1}(\F_q)$ becomes injective after inverting $p$.
 \item If $S$ denotes the product of $p$ and all prime divisor of $q-1$, the specialization map for $\SL_n(\F_q)$ becomes injective after inverting $S$.
\end{equivlist}
\end{PropB}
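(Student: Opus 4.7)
The plan is to compare $A^*BG(\F_q)_{\C}$ with its characteristic-$p$ counterpart in two complementary steps: first, construct classes in characteristic zero that specialize to the generators of Proposition~A and automatically satisfy the defining relations; second, use restriction to a detecting abelian subgroup to exhibit these classes as a full set of generators. Throughout, let $\tilde\rho$ denote the Brauer lift of the canonical representation $\rho$ of $G(\F_q)$, viewed as a virtual complex representation of the abstract group, and set $\tilde c_i = c_i(\tilde\rho) \in A^*BG(\F_q)_{\C}$.

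The key identity is $\psi^q\tilde\rho=\tilde\rho$ in the complex representation ring of $G(\F_q)$: for any $g\in G(\F_q)$ the eigenvalues of $\rho(g)$ form a Frobenius-stable multiset in $\bar{\F}_p^{\times}$, so their Brauer lifts satisfy $\sum\tilde\lambda_i^q=\sum\tilde\lambda_i$, and this is exactly the Brauer-character statement $\psi^q\tilde\rho=\tilde\rho$. Combined with the splitting-principle identity $c_i(\psi^qV)=q^i c_i(V)$, this forces the relation $(q^i-1)\tilde c_i=0$ in $A^*BG(\F_q)_{\C}$ with no localization required. The $\tilde c_i$ therefore define a ring homomorphism $s\colon R_j(n,q)\to A^*BG(\F_q)_{\C}$ (with $j=1,2,3$ according to the group) whose composition with $\sigma_G$ recovers the identity on $R_j(n,q)\cong A^*BG(\F_q)_{\bar{\F}_p}$ by Proposition~A. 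Hence $\sigma_G$ is already surjective and admits a canonical section.

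To upgrade surjectivity to injectivity of $\sigma_G$, equivalently to show $s$ is itself surjective, I would restrict to a maximal abelian $p'$-subgroup $A \subset G(\F_q)$ (for instance a Singer cycle for $\GL_n$ or its analogue built from norm-one elements of quadratic extensions for the symplectic and orthogonal groups) and exploit the commutative diagram
\[
\xymatrix{
A^*BG(\F_q)_{\C} \ar[r]^-{\sigma_G} \ar[d]_-{\res_{\C}} & A^*BG(\F_q)_{\bar{\F}_p} \ar[d]^-{\res_p} \\
A^*BA_{\C} \ar[r]^-{\sigma_A}_-{\sim} & A^*BA_{\bar{\F}_p}
}
\]
in which the bottom arrow is an isomorphism after inverting $p$, since $A$ is abelian of order prime to $p$ and $A^*BA_k$ depends only on the character group of $A$. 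The main obstacle is then to show that $\res_{\C}$ is injective after the specified localization. This is a case-by-case torsion analysis: for $\GL_n$ and $\Sp_{2m}$ a sign character on the Weyl group acting on Chern roots creates $2$-torsion in the cokernel, absorbed by inverting $2$ — or by the fourth root of unity in $\F_q$ when $q\equiv 1\pmod 4$; for $\SL_n$ the determinant-one constraint couples the $\tilde c_i$ nonlinearly and obliges one to invert every prime dividing $q-1$, as recorded in $S$; the orthogonal and special orthogonal cases reduce to $\GL_n$ with the quadratic form providing an additional invariant whose contribution is further $2$-torsion. Carrying out these torsion computations group by group is the technical heart of the proof.
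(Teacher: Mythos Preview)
Your first two paragraphs are sound and even slightly sharper than what the paper does: the identity $\psi^q\tilde\rho=\tilde\rho$ in $R_{\C}(G(\F_q))$ together with $c_i(\psi^qV)=q^ic_i(V)$ gives the relations $(q^i-1)\tilde c_i=0$ directly in characteristic zero, producing a ring section $s$ of $\sigma_G$ and hence surjectivity. (The paper obtains surjectivity via Proposition~\ref{PropSpez}(i), lifting Chern classes through the decomposition map $d\colon R_K(G)\to R_k(G)$; your Adams-operation argument is a pleasant alternative.) Note, however, that this part of your argument applies only to $\GL_n$, $\Sp_{2m}$, $\SL_n$; Proposition~A gives nothing for $\Oo_n$ and $\SO_n$, so there is no target ring $R_j(n,q)$ from which to build a section.

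The genuine gap is in your injectivity step. You reduce everything to showing that $\res_\C\colon A^*BG(\F_q)_\C\to A^*BA_\C$ is injective for a \emph{single} maximal abelian $p'$-subgroup $A$, after the indicated localization. There is no mechanism in your sketch for establishing this without already knowing $A^*BG(\F_q)_\C$; your ``case-by-case torsion analysis'' speaks of torsion in a cokernel, which is not the relevant quantity, and detection on one abelian subgroup is not a theorem for Chow rings (even in cohomology one needs the full family of elementary abelian subgroups). Concretely, for $\GL_2(\F_q)$ with $q$ odd and $A$ a Singer cycle, the source in degree~$2$ is $\Z/(q-1)\oplus\Z/(q^2-1)$ while the target is $\Z/(q^2-1)$, and verifying injectivity of the explicit map already presupposes the answer you are trying to prove.

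The paper proceeds entirely differently and never attempts to detect $A^*BG(\F_q)_\C$ on an abelian subgroup. Instead it invokes the transfer argument (Proposition~\ref{PropSpez}(ii)) to reduce injectivity of $\sigma_G$ to injectivity of $\sigma_P$ for each $l$-Sylow subgroup $P$ with $l\nmid 2p$ (resp.\ $l\neq p$, resp.\ $l\nmid S$). Weir's theorem identifies these Sylow subgroups as products of iterated wreath products $(\Z/l)^{\wr i}\wr\Z/l^b$, and the inductive step---that specialization remains an isomorphism under $\Z/l\wr(-)$---is handled by Proposition~\ref{PropWP}, whose proof compares cycle maps into \'etale cohomology on both sides and uses Totaro's analysis of cyclic products. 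This argument is uniform across $\GL_n$, $\Sp_{2m}$, $\Oo_n$, $\SO_n$, $\SL_n$ because it never needs to know $A^*BG(\F_q)$ for either base field.
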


To prove this it suffices to see that the specialization map is
injective for the respective $l$-Sylow subgroups of $G(\F_q)$ by the usual transfer argument.  
These $l$-Sylow subgroups are known to be products of iterated wreath products $(\Z/l)^{\wr i} \wr (\Z/l^b)$ by work of Weir (\cite{We}). 
The Chow ring of the classifying space of this kind of iterated wreath products is computed over the complex numbers by Totaro (\cite{To}), using the cycle map to 
Borel-Moore homology. We carry his proof over to the case of positive characteristic by using etale homology instead of Borel-Moore homology. 
It is an interesting question whether the specialization map is always injective or even an isomorphism after inverting $p$ for an arbitrary finite abstract group. 

For $G=\GL_n, \Sp_{2m}$ and $\SL_n$ the Chow ring $A^*BG(\F_q)_{\bar{\F}_p}$ is generated by Chern classes of representations.
Using the theory of Brauer lifts we see that the specialization map for these groups is surjective. 
\\
\\
{\bf Acknowledgement.} This article is part of the author's PhD thesis. I wish to thank my advisor Eike Lau for his guidance and support
and for many helpful comments on earlier drafts of this article.
\\
\\
\textit{Terminology and Notation.}
All schemes are assumed to be separated and of finite type over the base field $k$. Algebraic groups are affine smooth group schemes over $k$.
The character group of an algebraic group $G$ will be denoted by $\hat{G}$.
Group action always means left group action. 
If $\varphi \colon H \to G$ is a homomorphism of groups and $H$ is a subgroup of $G$ we call the action 
$h \cdot g = h g \varphi(h)^{-1}$ of $H$ on $G$ conjugation with $\varphi$ or $\varphi$-conjugation.
If $G$ is an algebraic group acting on a scheme $X$ we denote by $[X/G]$ the corresponding quotient stack.
If $X = \Spec k$ we sometimes write $BG$ instead of $[\Spec k/G]$ and call $BG$ the classifying space of $G$.
We denoty by $A^G_*(X)$ the $G$-equivariant Chow ring of $X$ defined in \cite{EG2}: For an open subset $U$ of a $G$-representation $V$ such that
$G$ acts freely on $U$ and $\codim(U^c,V)>i$ one defines $A_i^G(X)=A_{i+l-g}((X \times U)/G)$. Here $l$ resp. $g$ is the dimension of $V$ resp. $G$.
We call such a pair $(V,U)$ a good pair for $G$ and $(X \times U)/G$ an approximation of $[X/G]$.
If $X$ is smooth $A_*^G(X)$ carries a ring structure, which 
makes it naturally isomorphic to the equivariant operational Chow ring $A^*_G(X)$ of $X$ (\cite[Proposition 4]{EG2}). 
$A^*_G(X)$ in turn is isomorphic to the operational Chow ring $A^*([X/G])$ of the quotientstack $[X/G]$ provided that $X$ is smooth (\cite[Proposition 19]{EG2}). 
When $X= \Spec k$ is a point we sometimes write $A^*BG$ or $A^*_G$ instead of $A^*_G(\Spec k)$.

\tableofcontents

\section{The equivariant Chow Ring for $\varphi$-Conjugation}
In this section we consider the following situation: Let $G$ be a split reductive group over a field $k$ and let $\varphi \colon L \to M$ be an isogeny
between Levi subgroups $L$ and $M$ of $G$. We are then interested in the equivariant Chow ring $A^*_L(G)$, where $L$ acts on $G$ by $\varphi$-conjugation. 

Before we state our result, we recall the following fact: If $T$ is a split maximal torus of $G$ contained in $L$ there exists a unique element $g_0 \in G(k)$ up to 
multiplication with an element of $N_G(T)$ such that $\varphi(T)={}^{g_0}T(:=g_0Tg_0^{-1})$. This can be seen as follows. Let $K$ denote the kernel of $\varphi$. Then
there is an isomorphism $M \cong L/K$ under which $\varphi$ corresponds to the quotient map $L \to L/K$. Then $\varphi(T)=T/(T \cap K)$ is again
a split maximal torus of $G$ contained in $M$, since $K$ is finite. But two split maximal tori of $G$ are conjugated and the claim follows.

\begin{Proposition}
\label{PropChowConj}
Let $G$ be a connected split reductive group over a field $k$ with split maximal torus $T$. 
Consider an isogeny $\varphi \colon L \to M$, where $L$ and $M$ are Levi components of parabolic subgroups $P$ and $Q$ of $G$.
Assume $T \subset L$ and let $g_0 \in G(k)$ such that $\varphi(T)={}^{g_0}T$.
Let $\tilde{\varphi} \colon T \to T$ denote the isogeny $\varphi$ followed by conjugation with $g_0^{-1}$.
We write $S=\Sym(\hat{T})=A^*_T$ and $S_+=A^{\geq 1}_T$. We have a natural action of $\tilde{\varphi}$ on $S$, 
that we will also denote by $\tilde{\varphi}$. \\
Consider the action of $L$ on $G$ by $\varphi$-conjugation. 
If $G$ is special we have
\begin{equation} \label{EqProp}
 A^*_L(G)=S^{W_L}/(f-\tilde{\varphi} f \mid f \in S_+^{W_G}),
\end{equation}
where $W_G=W(G,T)$ and $W_L=W(L,T)$ denote the respective Weyl groups.
(Note that the action of $\tilde{\varphi}$ on $S^{W_G}$ is independent of the choice of $g_0$ since two choices differ by an element of $N_G(T)$.)
\end{Proposition}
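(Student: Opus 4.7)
The plan is to reduce the computation of $A^*_L(G)$ in stages. First, I would use the right-translation isomorphism $\psi : G \to G$, $g \mapsto gg_0$, which carries the $\varphi$-conjugation action of $L$ to $\tilde\varphi'$-conjugation for $\tilde\varphi'(l) := g_0^{-1}\varphi(l)g_0$, a map that by construction restricts to $\tilde\varphi$ on $T$. I may therefore assume from the outset that $\varphi(T) = T$ and $\varphi|_T = \tilde\varphi$. Next, because $G$ is special the Levi subgroup $L$ is also special, and the projection of quotient stacks $[G/T] \to [G/L]$ is a Zariski-locally trivial $L/T$-bundle with cellular fibers. The standard flag-bundle argument then gives
\[
A^*_L(G) = A^*_T(G)^{W_L},
\]
reducing the problem to computing $A^*_T(G)$ for the $\tilde\varphi$-conjugation action of $T$ and taking $W_L$-invariants at the end.

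The heart of the proof is computing $A^*_T(G)$. I would observe that the $\tilde\varphi$-conjugation action factors as the pullback, along the closed immersion
\[
\rho : T \hookrightarrow T\times T, \qquad t \mapsto (t, \tilde\varphi(t)),
\]
of the biequivariant action $(t_1,t_2)\cdot g = t_1 g t_2^{-1}$. Since $\rho(T)$ is normal in $T\times T$ with cokernel torus $T$ (via $(t_1,t_2) \mapsto t_2\tilde\varphi(t_1)^{-1}$), the induced map $[G/T]_{\tilde\varphi\text{-conj}} \to [G/(T\times T)]$ is a Zariski-locally trivial principal $T$-bundle, so
\[
A^*_T(G) \;=\; A^*_{T\times T}(G) \,\big/\, \bigl(1\otimes x - \tilde\varphi^* x \otimes 1 : x \in \hat T\bigr).
\]
The crucial input is then the biequivariant identification $A^*_{T\times T}(G) = S \otimes_{S^{W_G}} S$. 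I would derive this by first quotienting out the free right $T$-action (a Zariski-locally trivial torsor, since $G$ is special) to obtain $A^*_{T\times T}(G) = A^*_T(G/T)$, then passing through the affine-space bundle $G/T \to G/B$ to reduce to $A^*_T(G/B)$, and finally invoking the classical equivariant Borel presentation $A^*_T(G/B) = S \otimes_{S^{W_G}} S$.

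To conclude, imposing the relations $1 \otimes x = \tilde\varphi^* x \otimes 1$ in $S \otimes_{S^{W_G}} S$ collapses the right tensor factor onto the left via $\tilde\varphi^*$, producing a single copy of $S$ in which the balanced-tensor relations become $f = \tilde\varphi f$ for $f \in S^{W_G}$. This yields
\[
A^*_T(G) = S / \bigl(f - \tilde\varphi f : f \in S^{W_G}_+\bigr).
\]
Taking $W_L$-invariants commutes with this quotient thanks to the Chevalley-Shephard-Todd freeness of $S$ over $S^{W_L}$ (which ensures $S^{W_L}/J \hookrightarrow S/JS$ for any ideal $J \subset S^{W_L}$), giving the formula of the proposition.

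The main obstacle is the biequivariant identification $A^*_{T\times T}(G) = S \otimes_{S^{W_G}} S$; once it is established, the remaining work is essentially formal bookkeeping with exact sequences of tori and invariant theory. This is the step in which the specialness of $G$ is used in an essential way, since it is what allows the relevant torus bundles to trivialize Zariski-locally and permits the Borel presentation of $A^*_T(G/B)$ to hold over $\Z$.
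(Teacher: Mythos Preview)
Your computation of $A^*_T(G)$ via the $T\times T$-action and the Borel presentation $A^*_T(G/B)\cong S\otimes_{S^{W_G}}S$ is the same as the paper's. The difference, and the gap, is in the final descent from $T$ to $L$.

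You claim $A^*_L(G)=A^*_T(G)^{W_L}$ by ``the standard flag-bundle argument'' and then that taking $W_L$-invariants commutes with the quotient by $IS$. For the second claim you invoke Chevalley--Shephard--Todd freeness of $S$ over $S^{W_L}$, but freeness (equivalently faithful flatness) only yields the injection $S^{W_L}/IS^{W_L}\hookrightarrow (S/IS)^{W_L}$; it does not show that every $W_L$-invariant in $S/IS$ comes from $S^{W_L}$. That surjectivity is the nontrivial point, and it does not follow from module-freeness alone. (The flag-bundle argument itself likewise gives freeness of $A^*_T(G)$ over $A^*_L(G)$, not the identification with invariants; the integral statement $A^*_L(X)=A^*_T(X)^{W_L}$ needs a reference or a separate argument.)

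The paper avoids computing $(S/IS)^{W_L}$ altogether. Using that $L$ is special, it shows via Edidin--Graham that $A^*_T(G)=S/IS$ is free of rank $|W_L|$ over $A^*_L(G)$, and separately that $S/IS$ is free of the same rank $|W_L|$ over the subring $S^{W_L}/IS^{W_L}$. An elementary ring-extension lemma (if $R\subset R'\subset R''$ with $R''$ free of the same finite rank over $R$ and over $R'$, then $R=R'$) then forces $A^*_L(G)=S^{W_L}/IS^{W_L}$. Your outline can be repaired by inserting exactly this rank comparison in place of the invariants claim, but as written the last paragraph does not close.
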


\begin{Remark}
 We will only apply this Proposition in the case $L=G$ and $\varphi$ is the Frobenius isogeny. In this case Proposition \ref{PropChowConj} is the assertion
 of Proposition A in the introduction. However, the proof of this more general assertion is basically the same. 
 If $G$ is not special, equation \eqref{EqProp} still holds for the rational Chow ring, i.e. after tensoring with $\Q$. We leave this aside here
 because we are interested in the Chow ring of the classifying space of finite groups $H$, which are annihilated by the order $|H|$ in positive degree.
\end{Remark}

To prove this we use two lemmas.

\begin{Lemma}
\label{Lefflat}
Let $A \to B$ be a faithfully flat ring homomorphism and $I$ an ideal of $A$. Then $IB \cap A=I$.
\end{Lemma}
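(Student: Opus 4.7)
The plan is to derive this from the standard fact that any faithfully flat ring map is injective, applied after quotienting by $I$. The inclusion $I \subseteq IB \cap A$ is immediate, so only the reverse inclusion requires proof.

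First I would observe that faithful flatness is preserved under base change. Hence, tensoring $A \to B$ with $A/I$ over $A$ yields a faithfully flat ring homomorphism $A/I \to B \otimes_A A/I = B/IB$.

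Next, I would invoke the general fact that any faithfully flat homomorphism $A' \to B'$ is injective (indeed, if $a \in A'$ maps to $0$ in $B'$, then the map $A' \to A'$ given by multiplication by $a$ becomes zero after tensoring with $B'$, so it must already be zero since $B'$ is faithfully flat over $A'$; as $1 \in A'$, this forces $a = 0$). Applying this to $A/I \to B/IB$, I conclude that any $a \in A$ whose image lies in $IB$ must already lie in $I$, which is precisely the inclusion $IB \cap A \subseteq I$.

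There is no real obstacle here; the lemma is a textbook application of the preservation of faithful flatness under base change combined with the injectivity of faithfully flat maps. The only point deserving care is verifying that faithful flatness does pass to the quotient, which follows immediately from the definition since tensoring with $A/I$ commutes with tensoring with $B$ and preserves the exactness-detection property.
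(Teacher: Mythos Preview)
Your proof is correct and follows exactly the same line as the paper's: reduce to showing $A/I \to B/IB$ is injective, note this map is the base change of $A \to B$ along $A \to A/I$ and hence faithfully flat, and conclude injectivity. The paper states this in one sentence without spelling out the injectivity argument, but the content is identical.
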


\begin{proof}
We have to see that $A/I \to B/IB$ is injective. But this map is the base change of $A \to B$ by $A \to A/I$, thus again faithfully flat and hence injective.
\end{proof}

\begin{Lemma}
 \label{LeRingExt}
\begin{equivlist}
 \item Let $R \subset S$ be an extension of rings. Assume there exists an $R$-linear surjective map $f \colon R^n \to S^n$ for some $n$. Then $R=S$.
\item If $R \subset S \subset T$ is an extension of rings such that $T$ is a free module over $S$ and $R$ of the same finite rank, then $R=S$.
\end{equivlist}
\end{Lemma}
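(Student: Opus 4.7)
The plan is to reduce the whole lemma to Nakayama's lemma, with part (ii) following formally from part (i). For part (i), I would first extract finite generation: composing the surjection $f$ with projection to any single coordinate of $S^n$ gives an $R$-linear surjection $R^n \twoheadrightarrow S$, so $S$ is a finitely generated $R$-module. Since $R\hookrightarrow S$ is injective and localization is exact, proving $R=S$ is equivalent to proving $R_\pfr = S_\pfr$ for every prime $\pfr$ of $R$, and the hypotheses of (i) are preserved under localization at $\pfr$ by flatness. Hence I may assume that $R$ is local, with maximal ideal $\mfr$ and residue field $k$.

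In the local case, tensoring $f$ with $k$ over $R$ produces a $k$-linear surjection $k^n \twoheadrightarrow (S/\mfr S)^n$, forcing $\dim_k S/\mfr S \le 1$. Since $S$ is a nonzero finitely generated $R$-module (it contains $R$), Nakayama rules out $\dim = 0$, so $\dim_k S/\mfr S = 1$, and this one-dimensional quotient is visibly spanned by the image of $1\in R$. Thus $S = R + \mfr S$, and a second application of Nakayama to the finitely generated $R$-module $S/R$ gives $S/R = 0$, i.e.\ $R = S$.

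For part (ii), the two freeness hypotheses provide $R$-module isomorphisms $R^n \cong T \cong S^n$; in particular there is an $R$-linear surjection $R^n \twoheadrightarrow S^n$, and part (i) applies to give $R = S$. I do not foresee any real obstacle: the single observation that drives the argument is that a coordinate projection of $f$ already forces finite generation of $S$ over $R$, after which the rest is a routine Nakayama-plus-localization argument.
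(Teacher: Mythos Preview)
Your argument is correct. For part (i), the projection trick to get finite generation, the localization to the local case, and the two applications of Nakayama all go through exactly as you describe; for part (ii) the deduction from (i) is the same as in the paper.

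The paper takes a different route for (i): rather than localizing, it applies the $n$-th exterior power directly. The $R$-linear surjection $f\colon R^n\to S^n$ induces a surjection $\wedge^n_R(R^n)\to \wedge^n_S(S^n)$, i.e.\ an $R$-module surjection $R\to S$, so $S=Rx$ for some $x$. Since $1\in Rx$ the element $x$ is a unit, and writing $x^2=rx$ with $r\in R$ gives $x=r\in R$, hence $S=R$. The comparison is that the exterior-power trick immediately globalizes the ``$\dim_k S/\mfr S\le 1$'' information into the single statement that $S$ is cyclic over $R$, avoiding localization and the finite-generation preliminary; your approach trades that one clever step for a completely routine Nakayama-plus-localization package. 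Both are short, and neither has any real advantage in this setting.
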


\begin{proof}
(i) The map $f$ induces a surjective $R$-linear map $\wedge^n_R(R^n) \to \wedge^n_S(S^n)$ on the exterior powers. 
In other words $S=Rx$ for an $x \in S^*$. In particular, we find an $r \in R$ such that $x^2=rx$. It follows $x=r \in R$ and hence
$R=S$. Part (ii) follows from (i).
\end{proof}

\begin{proof}(of Proposition \ref{PropChowConj}.)
First note that right translation by $g_0$ induces an isomorphism $G \to G$ of $L$-spaces, where on the source we consider the action of $L$ on $G$ by
$\varphi$-conjugation and on the target the action of $L$ on $G$ by $\tilde{\varphi}$-conjugation. After replacing $M$ resp. $Q$ by ${}^{g_0^{-1}}M$ 
resp. ${}^{g_0^{-1}}Q$ and $\varphi$ by $\tilde{\varphi}$ we may assume that $T \subset M$, $\varphi(T)=T$ and $g_0=1$. \\
To compute $A^*_T(G)$ we consider the action of $T \times T$ on $G$ given by $(g_1,g_2) \cdot g=g_1 g g_2^{-1}$.
Using the embedding $T \hookrightarrow T \times T, g \mapsto (g,\varphi(g))$ we get a morphism $$[G/T] \to [G/(T \times T)]$$ which
is a principal $(T \times T)/T$-bundle. But $(T \times T)/T \cong T$ via the map induced by $(g_1,g_2) \mapsto \varphi(g_1)g_2^{-1}$. 
Hence by \cite[Lemma 2.4]{To2} we have
$$
A^*([G/T]) \cong A^*([G/(T \times T)])/\hat{T}A^*([G/(T \times T)]).
$$
Thus we need to compute $A^*([G/(T \times T)])$. For this let $B$ be a Borel subgroup of $G$ such that $T \subset B \subset P$. We can then identify 
$$A^*([G/(T \times T)])=A^*([G/(B \times T)])=A^*_T(G/B)$$  by the homotopy property of Chow groups. Here $G/B$ denotes the quotient for the usual 
right action of $B$ on $G$. Since $G$ is special we obtain from \cite[Proposition 6.6]{Br} 
$$ 
A^*([G/(T \times T)])\cong S \otimes_{S^{W_G}} S=(S \otimes_{\Z} S)/(1 \otimes f - f \otimes 1 \mid f \in S_+^{W_G}),
$$
where the isomorphism is induced by the two morphisms $S \to A^*([G/(T \times T)])$ given pull-back of the two projections $[G/(T \times T)] \to [\Spec k/T]$.
Now let $\chi \in \hat{T}$ be a character of $T$. Then $\chi$ acts on $A^*([G/(T \times T)]$ by multiplication with the element $\varphi \chi \otimes 1 - 1 \otimes \chi$ 
as follows from the definition of the isomorphism $(T \times T)/T \cong T$. Therefore
$$
A^*_T(G)= S/(f-\varphi f \mid f \in S^{W_G}_+).
$$
From this we deduce the result for $A^*_L(G)$ in the following way. Let us write $I$ for the ideal $(f- \varphi f \mid f \in S^{W_G}_+)$ in $S^{W_G}$. We remark that
$L \cap B$ is a Borel subgroup of $L$ containing $T$. Consider the $L/(L \cap B)$-bundles $[\Spec k/(L\cap B)] \to [\Spec k/L]$ and $[G/(L \cap B)] \to [G/L]$.
Since $L$ is special we obtain from \cite[Proposition 2]{EG} (non-canonical) isomorphisms
\begin{align*}
A^*_L \otimes A^*(L/(L \cap B)) & \cong A^*_{L \cap B}=A^*_T \\
A^*_L(G) \otimes A^*(L/(L \cap B)) & \cong A^*_{L \cap B}(G)=A^*_T(G).
\end{align*}
of $A^*_G$ resp. $A^*_G(G)$-modules.
Since $A^*_L=S^{W_L}$ by \cite[Theorem 1]{EG} and since $A^*(L/(L \cap B))$ is a free abelian group of rank $|W_L|$, we deduce from the first equation that $S$ is a free 
$S^{W_L}$-module of rank $|W_L|$. In particular, $S^{W_L} \hookrightarrow S$ is faithfully flat. It follows $IS \cap S^{W_L}=IS^{W_L}$ and that $S/IS$ 
is a free $S^{W_L}/IS^{W_L}$-module of the same rank $|W_L|$. 
The second equation tells us that $A^*_T(G)$ is a free $A^*_L(G)$-module of rank $|W_L|$. Therefore
$$S^{W_L}/IS^{W_L} \subset A^*_L(G) \subset A^*_T(G)=S/IS$$
and $A^*_T(G)$ is free over $S^{W_L}/IS^{W_L}$ and over $A^*_L(G)$ of the same finite rank $|W_L|$. Hence
$A^*_L(G)=S^{W_L}/IS^{W_L}$ by Lemma \ref{LeRingExt}. 
\end{proof}

\section{The Case of positive Characteristic}
 
\begin{Lemma}
\label{LeLang}
 Let $G$ be a connected algebraic group over an arbitrary field $k$ and $\varphi \colon G \to G$ be an isogeny with only a finite number of fixed points. 
 Consider the action of $G$ on itself by $\varphi$-conjugation and let $S$ denote the stabilizer group scheme of the neutral element. 
 Then there is a canonical 1-isomorphism
 $$ [G/G]\cong BS.$$
\end{Lemma}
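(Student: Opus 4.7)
The plan is to exhibit a canonical $1$-morphism $\Phi\colon BS\to [G/G]$ and then prove it is an equivalence by invoking the Lang--Steinberg theorem.

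For the construction of $\Phi$, I observe that the $\varphi$-conjugation action of $G$ on itself, restricted to $S\subset G$, fixes the neutral element: for $s\in S$ one has $s\cdot e = s\varphi(s)^{-1} = e$ since $\varphi(s)=s$. So the inclusion $\{e\}\hookrightarrow G$ is $S$-equivariant (with $S$ acting trivially on $\{e\}$ and by $\varphi$-conjugation on $G$). Combining with the inclusion $S\hookrightarrow G$ of group schemes gives a morphism $[\Spec k/S]\to[G/G]$, i.e.\ the desired $\Phi\colon BS\to[G/G]$.

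To see that $\Phi$ is an equivalence, I would first look at the orbit map at $e$, namely $\mu\colon G\to G$, $g\mapsto g\varphi(g)^{-1}$. The scheme-theoretic fiber of $\mu$ over $e$ is, by definition, the stabilizer $S$, so $\mu$ factors as $G\to G/S\to G$ with the first arrow a right $S$-torsor. The Lang--Steinberg theorem applied to the isogeny $\varphi$ (this is where the hypothesis of finitely many fixed points enters, ensuring the differential $1-d\varphi$ of $\mu$ at $e$ is bijective, hence $\mu$ is étale there, and that $\mu$ is surjective on $\bar k$-points) shows that $\mu$ is a faithfully flat (even étale) surjection, so $G/S\to G$ is an isomorphism. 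In particular $\mu\colon G\to G$ is itself a right $S$-torsor with respect to the action $s\cdot g = gs$.

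Given this, essential surjectivity of $\Phi$ is immediate: any object of $[G/G]$ over a test scheme $T$ consists of a $G$-torsor $P\to T$ together with an equivariant map $f\colon P\to G$; pulling back $\mu$ along $f$ and then descending along $P\to T$ (which works because $P\to T$ is an fppf $G$-torsor) produces, smooth-locally on $T$, an $S$-torsor from which $(P,f)$ is recovered, providing a quasi-inverse to $\Phi$. Fully faithfulness is an easy check on automorphism groups: the $\Aut$-sheaf of the trivial object in $BS$ is $S$, and the $\Aut$-sheaf of the image object in $[G/G]$ is the stabilizer of $e$, which is again $S$ by definition. I expect the main subtlety to be checking that Lang--Steinberg applies over an arbitrary (not necessarily algebraically closed or perfect) field $k$, i.e.\ verifying that $\mu$ is étale at $e$ in this generality; this reduces to the fact that the finiteness of the fixed-point scheme $S$ forces $1-d\varphi$ to be an isomorphism on $\Lie G$, after which faithful flatness plus equivariance upgrades étaleness at $e$ to étaleness everywhere.
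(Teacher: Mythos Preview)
Your overall strategy---construct $\Phi\colon BS\to[G/G]$ from the $S$-equivariant point $e\in G$ and show it is an equivalence by proving that the Lang map $\mu(g)=g\varphi(g)^{-1}$ is an $S$-torsor---is sound, and it differs from the paper's route. The paper instead verifies the gerbe criterion of \cite[Lemma~3.21]{LMB}: it shows any two objects of $[G/G]$ become isomorphic fppf-locally (which amounts to the same surjectivity/flatness of $\mu$) and exhibits a section with automorphism sheaf $S$. Your approach is more explicit and arguably more transparent; the paper's is slightly quicker once the gerbe criterion is quoted.

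There is, however, a genuine gap in your justification of flatness. You argue that finiteness of the fixed-point scheme $S$ forces $1-d\varphi$ to be an isomorphism on $\Lie G$, hence $\mu$ is \'etale. This implication fails in positive characteristic: a finite group scheme can have nonzero Lie algebra. Concretely, take $G=\G_m$ over a field of characteristic $p$ and $\varphi(x)=x^{p+1}$; then $S=\mu_p$ is finite, but $d\varphi=(p+1)\cdot\id=\id$, so $1-d\varphi=0$ and $\mu(g)=g^{-p}$ is faithfully flat yet nowhere \'etale. The hypothesis ``finitely many fixed points'' only gives that $S$ is finite as a scheme, not that it is \'etale.

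Fortunately you only need $\mu$ to be faithfully flat, and this follows without the tangent-space argument: $\mu$ is surjective on $\bar k$-points by Lang--Steinberg \cite[Theorem~10.1]{St}, every fibre is a translate $gS$ of $S$ and hence finite, and source and target are smooth varieties of the same dimension, so miracle flatness \cite[(21.D) Theorem~51]{Ma} gives flatness. This is exactly how the paper handles it. With that correction, your proof goes through.
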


\begin{proof}
 It suffices to show that the quotient stack $[G/G]$ is a gerbe that has a section over $k$ whose automorphism group is equal to $S$ (\cite[Lemma 3.21]{LMB}).
Let $T$ be a scheme and $B_1, B_2$ be two principal $G$-bundles over $T$ together with $G$-equivariant maps $B_i \to G$. After replacing $T$
by a suitable fppf covering we may assume that $B_1$ and $B_2$ are trivial. The $G$-equivariant maps $B_i \to G$ then correspond to
$g_i \colon T \to G$ and there exists an isomorphism of principal $G$-bundles respecting the maps $B_i \to G$ if and only if
$g_1$ and $g_2$ lie in the same $G(T)$-orbit. This holds after passing to a suitable covering of $T$ because  the morphism 
$G \to G, g \mapsto g\varphi(g)^{-1}$ is faithfully flat. Indeed the morphism is surjective by \cite[Theorem 10.1]{St} and has finite fibers since $\varphi$ has
only finitely many fixed point. Hence it is also flat by the mirracle flatness theorem (\cite[(21. D) Theorem 51]{Ma}). This shows that $[G/G]$ is a gerbe. 
The section of $[G/G]$ with automorphism group equal to $S$ is given by the trivial $G$-bundle over $\Spec k$ with $G$-equivariant map
$G \to G, g \mapsto g\varphi(g)^{-1}$.
\end{proof}

\begin{Corollary}
 \label{CorLang}
 Let $G$ be a connected algebraic group over $\F_q$. Let $G$ act on itself by conjugation with the $q$-th power Frobenius. 
 Then there is a canonical $1$-isomorphism
 $$
 B(G(\F_q)_{\F_q}) \cong [G/G].
 $$
\end{Corollary}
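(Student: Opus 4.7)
The plan is to obtain the corollary as a direct application of Lemma \ref{LeLang} to the $q$-th power Frobenius isogeny. So first I would take $\varphi \colon G \to G$ to be the geometric Frobenius associated to the $\F_q$-structure. Since $G$ is a connected algebraic group over $\F_q$, this morphism is a finite, faithfully flat, purely inseparable surjection, hence an isogeny in the sense used in Lemma \ref{LeLang}.

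Second, I would check that $\varphi$ has only finitely many fixed points. The scheme of fixed points of $\varphi \colon G \to G$ is cut out by the equation $\varphi(g) = g$, and this is exactly the subgroup scheme $G^{\varphi} = G(\F_q)_{\F_q}$. Since $\varphi$ is purely inseparable of positive degree (for $\dim G \ge 1$), its differential at $e$ is zero, so $G^\varphi$ is \'etale; it is also proper over $\F_q$, hence finite. (In the trivial case $\dim G = 0$ everything is finite anyway.) This verifies the hypothesis of Lemma \ref{LeLang}.

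Third, I would compute the stabilizer $S$ at the identity element under the $\varphi$-conjugation action $g \cdot h = g h \varphi(g)^{-1}$. By definition, $S$ is the subgroup scheme of $G$ defined by $g \cdot e = e$, i.e.\ $g \varphi(g)^{-1} = e$, which is the same as $\varphi(g) = g$. Hence $S = G^\varphi = G(\F_q)_{\F_q}$ as group schemes over $\F_q$.

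Applying Lemma \ref{LeLang} now yields a canonical $1$-isomorphism
\[
[G/G] \cong BS = B\bigl(G(\F_q)_{\F_q}\bigr),
\]
which is exactly the asserted isomorphism. No real obstacle arises here; the only point needing mild care is the finiteness of the fixed-point scheme of $\varphi$, and that is standard for a Frobenius isogeny of a connected algebraic group over $\F_q$.
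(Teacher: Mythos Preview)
Your proposal is correct and follows exactly the same approach as the paper: the paper's proof is the one-line remark that the corollary follows from Lemma~\ref{LeLang} applied with $\varphi$ the $q$-th power Frobenius, and you have simply spelled out the implicit verifications (that the Frobenius is an isogeny with finite fixed-point scheme and that the stabilizer of $e$ under $\varphi$-conjugation is $G(\F_q)_{\F_q}$).
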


\begin{proof}
This follows from the previous lemma applied to the case that $\varphi$ is the $q$-th power Frobenius.
\end{proof}

\begin{Corollary}
\label{Cor1}
Let $k$ be a field containing $\F_q$. Then the following equations hold.
\begin{align*}
 A^*B(\GL_n(\F_q)_k) &= \Z[c_1,\ldots,c_n]/((q-1)c_1,\ldots,(q^n-1)c_n) \\
 A^*B(\Sp_{2m}(\F_q)_k) &=\Z[c_2,c_4,\ldots,c_{2m}]/((q^2-1)c_2,(q^4-1)c_4,\ldots,(q^{2m}-1)c_{2m}), \\
 A^*B(\SL_n(\F_q)_k) &=\Z[c_2,c_3,\ldots,c_n]/((q^2-1)c_2,(q^3-1)c_3,\ldots,(q^n-1)c_n),
\end{align*}
where the $c_i$ are the $i$-th Chern classes of the canonical representation of the respective groups.  
\end{Corollary}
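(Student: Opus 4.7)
The plan is to combine Corollary \ref{CorLang} with Proposition A (the special case $L=G$ of Proposition \ref{PropChowConj}, with $\varphi$ the $q$-power Frobenius isogeny). By Corollary \ref{CorLang}, for $G \in \{\GL_n, \Sp_{2m}, \SL_n\}$ there is a canonical $1$-isomorphism $B(G(\F_q)_k) \cong [G_k/G_k]$ with $G$ acting on itself by $\sigma$-conjugation, and hence $A^*B(G(\F_q)_k) = A^*_G(G)$. Since all three groups are classically known to be special, Proposition A applies and yields
$$A^*_G(G) \;=\; S^{W_G}\big/\bigl(f - \sigma f \,\bigm|\, f \in S_+^{W_G}\bigr),$$
where $S = \Sym(\hat{T})$ for the split diagonal torus $T \subset G$ and $W_G$ is the Weyl group.

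The crucial concrete input is that $\sigma$ acts on $S$ in a transparent way: the $q$-power Frobenius restricts to $t \mapsto t^q$ on $T$, and consequently pulls each character $\chi \in \hat{T}$ back to $q\chi$. Therefore $\sigma$ acts on the degree $d$ piece of $S$ by multiplication by $q^d$, and for any homogeneous $f \in S^{W_G}_+$ of degree $d$ the relation $f - \sigma f = 0$ becomes $(q^d - 1) f = 0$.

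It then remains to identify $S^{W_G}$ as a polynomial ring in the Chern classes of the canonical representation in each case. For $\GL_n$, the Weyl group $S_n$ permutes the characters $x_1,\ldots,x_n$, so $S^{W_G} = \Z[c_1,\ldots,c_n]$ with $c_i$ the $i$-th elementary symmetric polynomial, which is the $i$-th Chern class of the standard representation; the relations become $(q^i-1)c_i = 0$. For $\Sp_{2m}$, the hyperoctahedral Weyl group $(\Z/2)^m \rtimes S_m$ acts on $x_1,\ldots,x_m$ by permutations and sign changes, so $S^{W_G} = \Z[c_2,c_4,\ldots,c_{2m}]$ where $c_{2i}$ is, up to sign, the $i$-th elementary symmetric polynomial in $x_1^2,\ldots,x_m^2$ and coincides with the $2i$-th Chern class of the standard representation (the odd Chern classes vanishing by symplectic self-duality); the relations become $(q^{2i}-1)c_{2i} = 0$. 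For $\SL_n$, the character group satisfies $x_1 + \cdots + x_n = 0$, so $c_1 = 0$ and $S^{W_G} = \Z[c_2,\ldots,c_n]$; the relations are $(q^i-1)c_i = 0$ for $i \geq 2$. Inserting these descriptions into the displayed presentation gives the three asserted formulas.

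The only nontrivial point is matching the abstract generators of $S^{W_G}$ with the stated Chern classes and keeping the degrees straight; for each of $\GL_n$, $\Sp_{2m}$, $\SL_n$ this is classical and presents no genuine obstacle. The real content of the corollary is simply the observation that $\sigma$ acts by $q^d$ in degree $d$, which converts the relations $f - \sigma f$ of Proposition A into the factors $(q^d-1)$ appearing in the statement.
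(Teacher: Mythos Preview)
Your proof is correct and follows essentially the same approach as the paper: apply Corollary \ref{CorLang} to identify $B(G(\F_q)_k)$ with $[G_k/G_k]$, invoke specialness, and then use Proposition \ref{PropChowConj} with $L=G$ and the Frobenius isogeny. The paper simply cites \cite[Section 15]{To} for the identification $S^{W_G}=A^*BG$ in each case, whereas you spell out the invariant rings and the action of $\sigma$ by $q^d$ in degree $d$ explicitly; this is the only difference, and it is one of exposition rather than strategy.
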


\begin{proof}
Let $G$ be one of the groups $\GL_n$, $\Sp_{2m}$ or $\SL_n$. By Corollary \ref{CorLang} we have $B(G(\F_q)_k)=[G_k/G_k]$, 
where the action is given by conjugation with the $q$-th power Frobenius.
Moreover, $G$ is special (\cite[Section 4.4]{Se}) and therefore the theorem follows from Proposition \ref{PropChowConj} applied to the 
case $L=G$ and the Frobenius isogeny, using that $S^{W_G}=A^*BG$ is computed in \cite[Section 15]{To}.
\end{proof}

\section{Specialization}
\textit{Specialization for Chow Rings.}
Let $R$ be a discrete valuation ring with fraction field $K$ and residue field $k$ and $i \colon \Spec k \hookrightarrow \Spec R$. 
Let $X$ be of finite type and separated over $S=\Spec R$. 
Then there exists a unique map $\sigma_R \colon A_* X_K \to A_* X_k,$ such that the diagram
$$
\xymatrix{
A_*(X/S) \ar[r] \ar[d]_{i^!} & A_*(X_K) \ar[dl]^{\sigma_R} \\
A_*(X_k)
}
$$
commutes. Here $i^!$ is the refined Gysin homomorphism of $i$ and $A_*(X/S) \to A_*(X_K)$ is the pull-back along the open immersion $X_K \hookrightarrow X$. 
The map $\sigma_R$ is called specialization map. For smooth $X$ $\sigma_R$ is a morphism of rings (\cite[Section 20.3]{Fu}).
On the level of cycles $\sigma_R$ is defined by the map 
 $$
 Z_*(X_K) \to Z_*(X_k), \quad [V] \mapsto [\bar{V}_k],
 $$
where $\bar{V}$ denotes the closure of $V$ in $X$. 

Equivariant chow groups are more generally defined for actions of affine smooth group schemes $G$ defined over $S= \Spec R$ 
with $R$ a discrete valuation ring (or more generally a Dedekind domain). Namely, by \cite[Lemma 7]{EG2} we can find for any $i>0$ a finitely generated projective 
$S$-module $E$ such that $G$ acts freely on an open subset $U$ of $E$ whose complement has codimension greater than $i$. 
For such an $U$ one defines $$A^G_i(X)=A_{i+l-g}(X \times^G U)$$ where $l= \dim(U/S)$ and $g=\dim(G/S)$. All the functorial properties of 
equivariant intersection theory remain valid. In particular, if $X$ is smooth over $S$ there is an intersection product on $A^G_*(X)$.  

For every pair $(E,U)$ as above we get a homomorphism of graded rings
$$ A^*(U_K/G_K) \to A^*(U_k/G_k).$$
These morphisms induce a map
$$
\sigma_R \colon A^*_{G_K} \to A^*_{G_k}
$$
of graded rings. In particular, there exist such a map for any finite abstract group $G$ viewed as a finite algebraic group over $S$.
We have the following naive criterion for the specialization map to be an isomorphism.

\begin{Proposition}
 \label{PropSpez}
Let $G$ be a finite abstract group viewed as a finite algebraic group over a discret valuation ring with quotient field $K$ and residue field $k$ of mixed characteristic.
Then the following assertions hold:
\begin{equivlist}
 \item The specialization map is surjective if $A^*_{G_k}$ is generated as a $\mathbb{Z}$-algebra by Chern classes of representations of $G_k$.
 \item The specialization map is injective if this holds true for every Sylow subgroup of $G$. 
\end{equivlist}
\end{Proposition}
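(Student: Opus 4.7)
The plan is to tackle (ii) first, as it follows from a standard transfer argument, and then handle (i) using Brauer's lifting theorem together with the compatibility of Chern classes with specialization.

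For (ii) I would use the classical restriction/transfer formalism. For each prime $l$ and an $l$-Sylow subgroup $P \subset G$, viewed as an inclusion of finite étale group schemes over $R$, restriction $\res \colon A^*_G \to A^*_P$ and transfer $\tr \colon A^*_P \to A^*_G$ are the flat pullback and proper pushforward along the étale cover $BP \to BG$. Both commute with the specialization map $\sigma$ by the functoriality of the refined Gysin homomorphism under flat pullback and proper pushforward (\cite[Sections 20.1 and 20.3]{Fu}). The composite $\tr \circ \res$ is multiplication by $[G:P]$. Hence if $x \in A^*_{G_K}$ satisfies $\sigma_G(x) = 0$, then $\res(x) \in \ker(\sigma_P) = 0$ by hypothesis, and so $[G:P] \cdot x = 0$. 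Applying this for $P = P_l$ at every prime $l$ kills $x$ in each localisation $A^*_{G_K} \otimes \Z_{(l)}$, and the injectivity of $A^*_{G_K} \to \prod_l A^*_{G_K} \otimes \Z_{(l)}$ then forces $x = 0$.

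For (i) the plan is, given a representation $\rho \colon G \to \GL_n(k)$ whose Chern classes appear among the generators of $A^*_{G_k}$, to produce a virtual representation $\tilde\rho$ of $G_K$ satisfying $\sigma(c_i(\tilde\rho)) = c_i(\rho)$ for all $i$. Brauer's decomposition theorem furnishes such a lift, possibly after replacing $R$ by a suitable finite unramified extension $R'$ with fraction field $K'$ a splitting field for $G$: the decomposition map from virtual $K'$-representations of $G$ to virtual $k'$-representations of $G$ is surjective, so there is a virtual $\tilde\rho$ of $G_{K'}$ whose reduction represents the class of $\rho$. Multiplicativity of total Chern classes in short exact sequences implies that $V \mapsto c(V)$ descends to a well-defined map on Grothendieck groups; combined with the functoriality of Chern classes under specialization for vector bundles on $BG_{R'}$ (using any $G$-stable $R'$-lattice realising a component of $\tilde\rho$) this yields $\sigma'(c_i(\tilde\rho)) = c_i(\rho)$ in $A^*_{G_{k'}}$. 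A Galois descent argument, exploiting that $c_i(\rho)$ is defined over $k$ and hence $\Gal(K'/K)$-invariant, finally provides a preimage in $A^*_{G_K}$ whose specialization is $c_i(\rho) \in A^*_{G_k}$.

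The main obstacle is the lifting step in (i): making the Brauer decomposition genuinely compatible with Chern class specialization, and then descending along the unramified base change $R \subset R'$ back to $R$ itself. In the applications to Theorem A this is painless, because the Brauer lifts of the canonical representations of $\GL_n$, $\Sp_{2m}$, and $\SL_n$ are already defined over $K$. Part (ii) by contrast should be essentially formal once one has confirmed the expected behaviour of restriction and transfer under specialization for finite étale group schemes over $R$.
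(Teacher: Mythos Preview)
Your approach matches the paper's in both parts. For (ii) both arguments use the restriction/transfer identity $\tr \circ \res = [G:P]$; the paper localizes at $l$ first (so that $\res$ becomes a split injection on $(l)$-localizations), while you argue directly that each $[G:P_l]$ annihilates $\ker \sigma_G$---these are equivalent formulations of the same transfer argument.

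For (i) the paper likewise invokes the surjectivity of the decomposition map $d \colon R_K(G) \to R_k(G)$ together with the commutative square
\[
\xymatrix{
R_K(G) \ar[r]^d \ar[d]_{c_i} & R_k(G) \ar[d]^{c_i} \\
A^i_{G_K} \ar[r]^{\sigma} & A^i_{G_k}
}
\]
coming from the compatibility of Chern classes with specialization (plus a short lemma extending $c_i$ to virtual representations, which you also sketch). You are more careful than the paper in flagging that Serre proves $d$ surjective only under the standing hypothesis that $K$ is sufficiently large; the paper simply cites \cite[Section 16.1]{Se2} and moves on, which is harmless since in the intended application one passes to $\bar{\Q}_p$ anyway. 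Your proposed repair via Galois descent, however, does not work as written: the fact that $c_i(\rho)$ lies in $A^*_{G_k}$ and is therefore $\Gal(K'/K)$-invariant does not by itself yield a preimage in $A^*_{G_K}$, since Chow groups of classifying spaces need not satisfy Galois descent, and the lift $c_i(\tilde\rho) \in A^*_{G_{K'}}$ you construct has no reason to be Galois-invariant either. As you yourself note, for the groups appearing in Theorem~A the Brauer lifts are already defined over $K$, so this subtlety does not affect the applications.
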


\begin{proof}
 To prove (i) we use the theory of Brauer lifts (\cite[Chapter 18]{Se2}).
Let $R_K(G)$ resp. $R_k(G)$ denote the representation ring of $G_K$ resp. $G_k$. Consider the diagram
$$
\xymatrix{
R_K(G) \ar[r]^d \ar[d]^{c_i} & R_k(G) \ar[d]_{c_i} \\
A^i_{G_K} \ar[r]^{\sigma} & A^i_{G_k}. 
}
$$
Here the maps $c_i$ are induced by the $i$-th Chern class (see Lemma \ref{LemChernclassVR} below) and $d$ is defined as follows. 
If $V$ is a $K$-representation of $G$ we choose a $G$-invariant $R$-lattice $\tilde{V}$ in $V$. The class $[V]$ is then mapped under $d$ to the class 
$[\tilde{V} \otimes k]$ in $R_k(G)$. This class does not depend on the choice of a lattice (\cite[Section 15.2]{Se2}).
By definition of the specialization map and \cite[Proposition 6.3]{Fu} we see that the above diagram is commutative. 
The map $d$ is surjective by \cite[Section 16.1]{Se2}.
In other words we can lift Chern classes in $A^*_{G_k}$ to Chern classes in $A^*_{G_K}$. Hence the specialization map is surjective if $A^*_{G_k}$ is generated 
by Chern classes of representations of $G_k$. 

If $P$ is an $l$-Sylow subgroup of $G$ for some prime $l$ we obtain a diagram
$$
\xymatrix{
(A^*_{G_K})_{(l)} \ar@{^{(}->}[r]^{\res^{G_K}_{P_K}} \ar[d]_{\sigma} & (A^*_{P_K})_{(l)} \ar[d]_{\sigma} \\
(A^*_{G_k})_{(l)} \ar@{^{(}->}[r]^{\res^{G_k}_{P_k}} & (A^*_{P_k})_{(l)}.
}
$$
This diagram is commutative by \cite[Proposition 6.2 (b)]{Fu}. The injectivity of the horizontal maps follows from the fact, that
the composition of restriction map followed by transfer map $A^*_G \stackrel{\res^G_P}{\rightarrow} A^*_P \stackrel{\tr^G_P}{\rightarrow} A^*_G$ is multiplication 
with the index $[G:P]$, which is prime to $l$ (\cite[Section 2.5]{To2}). Hence (ii) follows.
\end{proof}

\begin{Lemma}
 \label{LemChernclassVR}
If $G$ is a finite abstract group and $k$ an arbitrary field, then for each $i \in \Z_{\geq 0}$ there are unique maps $c_i \colon R_k(G) \to A^i_{G_k}$ satisfying 
the following two properties:
\begin{equivlist}
 \item For any $G_k$-representation $V$ one has $c_i([V])=c_i(V) \in A^i_{G_k}$.
 \item $c_i(E_1+E_2)=\sum_{k+l=i} c_k(E_1)c_l(E_2)$ for $E_1,E_2 \in R_k(G)$.
\end{equivlist}
\end{Lemma}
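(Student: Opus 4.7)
The plan is to encode the collection $(c_i)_{i \geq 0}$ as a single total Chern class and then invoke the universal property of the Grothendieck group. Concretely, I would work in the multiplicative group
\[
\Lambda := \bigl(1 + t A^*_{G_k}[[t]]\bigr)^\times,
\]
whose elements $1 + a_1 t + a_2 t^2 + \cdots$ are invertible via the usual geometric series (this makes sense because each coefficient of a fixed power of $t$ in any product is a finite sum). Defining a family of maps $c_i \colon R_k(G) \to A^i_{G_k}$ satisfying (i) and (ii) is equivalent to defining a single map $c_t \colon R_k(G) \to \Lambda$ with $c_t(E) = \sum_i c_i(E) t^i$, since condition (ii) is exactly the statement that $c_t$ is a group homomorphism.

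For existence, I would first check that on actual $G_k$-representations the assignment $V \mapsto c_t(V) := \sum_i c_i(V) t^i$ defines a homomorphism from the additive monoid of (isomorphism classes of) $G_k$-representations to $\Lambda$. This is the Whitney sum formula $c_t(V \oplus W) = c_t(V) c_t(W)$, which holds for equivariant Chern classes of representations by the usual argument: on an approximation $U \to U/G$ the representation $V$ gives a vector bundle $(U \times V)/G$ on $U/G$, the direct sum of representations corresponds to the direct sum of bundles, and the Whitney formula for ordinary Chern classes of vector bundles on smooth schemes then transfers to $A^*_{G_k}$. Since $\Lambda$ is a group, the universal property of the Grothendieck group of the additive monoid of representations yields a unique extension of $c_t$ to a group homomorphism $R_k(G) \to \Lambda$. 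Reading off the coefficient of $t^i$ defines $c_i$ on all of $R_k(G)$ and gives (i) and (ii) by construction.

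For uniqueness, any family $(c_i)$ satisfying (i) and (ii) assembles into a homomorphism $c_t \colon R_k(G) \to \Lambda$ whose values on genuine representations are prescribed by (i). Two such homomorphisms agreeing on the generators of $R_k(G)$ must agree everywhere, so the $c_i$ are unique. I do not foresee a serious obstacle here; the only point requiring minor care is the verification that the Whitney formula for representations really lands in the operational equivariant Chow ring at the level of $R_k(G)$ (rather than just on individual representations), but this is automatic once one works with $\Lambda$ and invokes universality.
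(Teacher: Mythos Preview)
Your argument is correct and amounts to a cleaner repackaging of the paper's proof. The paper works with the explicit basis of $R_k(G)$ given by the simple $kG$-modules (via Jordan--H\"older): property (i) fixes $c_i$ on this basis and (ii) then forces the value on every virtual representation, giving existence and uniqueness at once. You instead encode (ii) as a group homomorphism into $\Lambda=(1+tA^*_{G_k}[[t]])^\times$ and invoke the universal property; this avoids choosing a basis and makes the multiplicative structure behind (ii) transparent.

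One point deserves care when $\cchar k$ divides $|G|$: the group completion of the monoid of isomorphism classes of representations under $\oplus$ is in general strictly larger than $R_k(G)$, which also carries the relations $[V]=[V']+[V'']$ coming from non-split short exact sequences. For $c_t$ to descend to $R_k(G)$ you therefore need the Whitney formula $c_t(V)=c_t(V')c_t(V'')$ for short exact sequences of equivariant bundles, not merely for direct sums. This is standard---on an approximation $U/G$ the associated sequence of vector bundles is exact and \cite[Theorem~3.2(e)]{Fu} applies---and the paper's own proof tacitly uses the same fact when checking (i) for a non-simple $V$. With that one sentence added your proof is complete.
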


\begin{proof}
 Each virtual representation $E$ of $G$ has a unique expression $E= \sum_{L \in S(G)} \lambda_i [L]$ with $\lambda_i \in \Z$. Here $S(G)$ 
 denotes the set of isomorphism classes of simple representations of $G$. The properties $(i)$ and $(ii)$ then determine $c_i(E)$ uniquely.
\end{proof}

\begin{Remark}
Condition (i) of the above proposition does not hold in general.
An example is given by the symmetric groups (\cite[Section 4]{To}).
\end{Remark}

Let $K$ be a finite field extension of $\mathbb{Q}_p$ and $R$ the integral closure of $\mathbb{Z}_p$ in $K$. Since $\mathbb{Q}_p$ is complete for the $p$-adic 
valuation $R$ is again a discrete valuation ring. Also its residue field $k$ is a finite field extension of $\mathbb{F}_p$. 
If $K'$ is another finite extension of $\Q_p$ containing $K$ with integral closure $R'$ of $\Z_p$ in $K'$ and residue field $k'$ then it is easy to see
that the following diagram commutes
$$
\xymatrix{
A^*_{G_{K'}} \ar[r]^{\sigma_{R'}}  & A^*_{G_{k'}}  \\
A^*_{G_K} \ar[r]^{\sigma_R} \ar[u] & A^*_{G_k} \ar[u]
}
$$
Here the vertical maps are induced by pull-back. 
We can thus take the direct limit over all finite field extensions $K$ of $\Q_p$ of the specialization maps $\sigma_R$ and obtain a map
$$ A^*_{G_{\bar{\Q}_p}} \to  A^*_{G_{\bar{\F}_p}} $$
for every finite group $G$. Fixing an isomorphism $\mathbb{C} \cong \bar{\mathbb{Q}}_p$ we obtain a homomorphism of graded rings 
$$
\sigma_G \colon A^*_{G_{\mathbb{C}}} \to A^*_{G_{\bar{\mathbb{F}}_p}}.
$$ 

\textit{Specialization for etale Homology.}
For any scheme $S$ over a separably closed field $k$ etale 
homology is defined as $$ H_i(S,\Z_l)=H^{-i}R\Gamma(S,T_S) $$ for $l \neq $ char $k$, where $T_S$ is the dualizing complex of $S$. If $a$ denotes the structure map of $S$ 
then $T_S=R a^!\Z_l \in D(S,\Z_l)$. Let us recall the properties of etale homology we shall need. For the proof we refer to \cite{La}.

\begin{Proposition}
\label{PropEtaleHomology}
 Let $k$ be a separably closed field and $l$ be a prime different from the characteristic of $k$. Let $X$ be a scheme over $k$ of dimension $d$.
\begin{equivlist}
 \item $H_i(X,\Z_l)=0$ for $i<0$ and $i>2d$ and $H_{2d}(X,\Z_l)$ is freely generated by the irreducible components of $X$ of dimension $d$.
 \item If $X$ is smooth then $H_i(X,\Z_l)=H^{2d-i}(X,\Z_l)$, where the right hand side denotes the usual $l$-adic cohomology groups.
 \item (Functoriality) Let $f \colon X \to Y$ be proper resp.\ flat of relative dimension $n$. Then $f$ induces an additive push-forward map 
 $f_* \colon H_*(X,\Z_l) \to H_*(Y,\Z_l)$ resp.\ pull-back map $f^* \colon H_*(Y,\Z_l) \to H_{* + n}(X,\Z_l)$ compatible with composition. 
\item  If $f \colon X \to Y$ is finite and locally free of degree $n$ then the composition 
$$
\xymatrix{
H_*(Y,\Z_l) \ar[r]^{f^*} & H_*(X,\Z_l) \ar[r]^{f_*} & H_*(Y,\Z_l) 
}
$$
is multiplication by $n$.
\item(Künneth Formula) If $Y$ is another scheme over $k$ there is an exact sequence of the form
$$
\xymatrix{
0 \ar[r] & \bigoplus_{r+s=i} H_r(X) \otimes H_s(Y) \ar[r] & H_i(X \times Y) &&&
}
$$
$$
\xymatrix{
&&&& \ar[r] & \bigoplus_{r+s=i-1} \Tor_1(H_r(X),H_s(Y)) \ar[r] & 0.
}
$$
\item(Cycle Map) There is an additive cycle map $cl_X \colon A_*(X) \to H_{2*}(X,\Z_l)$ compatible with proper push-forward, flat pull-back and Chern classes.
If $X$ is smooth then $cl_X$ defines a morphism of rings.
\end{equivlist}
\end{Proposition}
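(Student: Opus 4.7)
The plan is to deduce all six properties from Grothendieck's six-functor formalism for constructible $\Z_l$-sheaves, exploiting the definition $T_X = Ra^!\Z_l$ with $a\colon X \to \Spec k$ the structure morphism. Since $k$ is separably closed, Tate twists can be trivialised after fixing $\Z_l(1) \cong \Z_l$, which is why they do not appear in the statement; I suppress them throughout.

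For (i) and (ii): if $X$ is smooth of dimension $d$, absolute purity gives $T_X \cong \Z_l[2d]$, so $H_i(X,\Z_l) \cong H^{2d-i}(X,\Z_l)$, and both the vanishing for $i>2d$ and Poincaré duality (ii) follow from the standard cohomological dimension bounds (SGA 4, Exp.~X). The vanishing in the singular case and the description of $H_{2d}$ in (i) I would obtain by Noetherian induction: choose a smooth dense open $j\colon U \hookrightarrow X$ with closed complement $i\colon Z \hookrightarrow X$, use the distinguished triangle $j_! T_U \to T_X \to i_* T_Z$, and combine with $\dim Z < d$ to reduce to the smooth case; in top degree one picks up exactly one generator per irreducible component of maximal dimension via the fundamental class of its smooth locus.

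The functoriality in (iii) is a direct translation of the six functors: for $f$ proper one has $f^!T_Y \cong T_X$ and $Rf_* = Rf_!$, so the counit $Rf_!f^! \to \id$ applied to $T_Y$ gives $Rf_*T_X \to T_Y$, hence $f_*$ on homology; for $f$ flat of relative dimension $n$, I use relative purity (in the smooth case $f^! \cong f^*[2n]$, and in general one reduces via a smooth local factorisation) to obtain a map $f^*T_Y[2n] \to T_X$, inducing $f^*\colon H_i(Y) \to H_{i+n}(X)$. Property (iv) then follows because for $f$ finite locally free of degree $n$ the composition $\id \to Rf_*f^* \to \id$ (unit followed by trace) is multiplication by $n$ on $\Z_l$. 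The Künneth formula (v) comes from the external product isomorphism $T_X \boxtimes T_Y \cong T_{X\times Y}$ combined with the usual Künneth spectral sequence for $\Z_l$-coefficients, which degenerates to the displayed short exact sequence since the homology groups are finitely generated $\Z_l$-modules.

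For (vi) I would define $cl_X$ on an irreducible closed subvariety $V \subset X$ of dimension $r$ by choosing a smooth dense open $V^{\circ} \subset V$, taking its fundamental class in $H_{2r}(V^{\circ},\Z_l) \cong H^0(V^{\circ},\Z_l) = \Z_l$, pushing forward along $V^{\circ} \hookrightarrow V$ (whose complement contributes nothing in top degree by (i)), and then along $V \hookrightarrow X$ via proper push-forward. Invariance under rational equivalence is then checked by proper push-forward along a correspondence in $X \times \Pbb^1$; compatibility with flat pull-back, proper push-forward, and Chern classes is formal, reducing (via the splitting principle) the Chern-class statement to the case of a line bundle, where $c_1$ on both sides is manifestly the same class in $H^2(X,\Z_l)$. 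The main obstacle is not mathematical depth but bookkeeping: every clause is a translation of a well-known property of the six functors through the identity $H_i(X,\Z_l) = H^{-i}(X,T_X)$, and the real work lies in verifying the many small naturalities and compatibilities (Poincaré duality with proper push-forward, relative purity in the precise generality needed in (iii), functoriality of the Künneth map, and of the cycle class), which is exactly the content of Laumon's notes that the paper defers to.
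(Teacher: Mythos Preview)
The paper itself gives no proof of this proposition: it simply states ``For the proof we refer to \cite{La}'' and moves on. Your sketch is therefore already more than what the paper offers, and it is in spirit the standard six-functor derivation that Laumon carries out, so there is no meaningful difference of approach to report.

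One genuine slip: the distinguished triangle you invoke for (i), written as $j_! T_U \to T_X \to i_* T_Z$, is not correct. Since $T_Z = Ri^! T_X$ (not $i^* T_X$) and $T_U = j^* T_X$, the triangle containing both dualizing complexes is
\[
i_* T_Z \longrightarrow T_X \longrightarrow Rj_* T_U \longrightarrow i_* T_Z[1],
\]
i.e.\ the localisation triangle $i_* Ri^! \to \id \to Rj_* j^*$, which is exactly the one the paper uses in Lemma~\ref{LeLES}. Your triangle mixes this with the other localisation triangle $j_! j^* \to \id \to i_* i^*$, whose third term is $i_* i^* T_X$, not $i_* T_Z$; these differ already when $X$ is smooth and $Z$ is a smooth divisor. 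The correct triangle still gives the long exact sequence you need, so the inductive argument goes through once this is fixed.

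A smaller point: your construction of $f^*$ for flat $f$ via ``relative purity plus smooth local factorisation'' is not quite how it is usually done in this generality (an arbitrary flat morphism need not factor locally through a smooth one); Laumon instead builds the pull-back via the trace/fundamental-class formalism. This does not affect the correctness of your outline, but if you were to fill in details that is where the bookkeeping would bite.
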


Since we do not have a reference for the following lemma, we prove it here.

\begin{Lemma}
\label{LeLES}
 Let $Z \subset S$ be a closed subscheme and denote by $U$ its complement in $S$. Denote the inclusion $Z \hookrightarrow S$ resp.\ $U \hookrightarrow S$ 
 by $i$ resp.\ $j$. Then there is a long exact sequence
 $$
 \xymatrix{
 \ldots \ar[r] & H_{i+1}(U,\Z_l) \ar[r] & H_i(Z,\Z_l) \ar[r]^{i_*} & H_i(S,\Z_l) \ar[r]^{j^*} & H_i(U,\Z_l) \ar[r] &
 }
 $$
\end{Lemma}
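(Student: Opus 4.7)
The plan is to derive the long exact sequence from the standard localization distinguished triangle in the derived category of $\ell$-adic sheaves, applied to the dualizing complex $T_S$.

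First I would recall the localization triangle. For the closed immersion $i\colon Z \hookrightarrow S$ with open complement $j\colon U \hookrightarrow S$, there is a functorial distinguished triangle of functors
$$
i_* R i^! \longrightarrow \id \longrightarrow Rj_* j^* \longrightarrow [1]
$$
on $D(S,\Z_l)$. Applying this to $T_S = R a^! \Z_l$ (where $a$ is the structure map of $S$) yields the distinguished triangle
$$
i_* R i^! T_S \longrightarrow T_S \longrightarrow Rj_* j^* T_S \longrightarrow [1]
$$
in $D(S,\Z_l)$.

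Next I would identify the outer terms. Since $i$ is a closed immersion and $j$ is an open immersion, one has $R(a \circ i)^! = Ri^! \circ Ra^!$ and $j^* = j^!$, so by functoriality of shriek-pullback
$$
Ri^! T_S = R(a \circ i)^! \Z_l = T_Z, \qquad j^* T_S = j^! T_S = R(a \circ j)^! \Z_l = T_U.
$$
Then I would apply $R\Gamma(S, -)$. Because $i$ is a closed immersion, $R i_* = i_*$ and $R\Gamma(S, i_* T_Z) = R\Gamma(Z, T_Z)$, while for the open immersion $R\Gamma(S, Rj_* T_U) = R\Gamma(U, T_U)$. This produces a distinguished triangle of complexes of abelian groups
$$
R\Gamma(Z, T_Z) \longrightarrow R\Gamma(S, T_S) \longrightarrow R\Gamma(U, T_U) \longrightarrow [1].
$$

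Finally, I would pass to the long exact sequence of cohomology objects and re-index using $H_n(X,\Z_l) = H^{-n} R\Gamma(X, T_X)$. This produces exactly
$$
\cdots \to H_{i+1}(U,\Z_l) \to H_i(Z,\Z_l) \xrightarrow{i_*} H_i(S,\Z_l) \xrightarrow{j^*} H_i(U,\Z_l) \to \cdots,
$$
where the identification of the induced maps with $i_*$ (proper pushforward) and $j^*$ (flat pullback) follows from the way these functorial maps are defined in \cite{La} in terms of the adjunction units/counits underlying the localization triangle. The main technical point is only to cite or verify the localization triangle and the base-change identifications of $i^!$ and $j^*$ applied to $T_S$; once these are in place, the long exact sequence is formal.
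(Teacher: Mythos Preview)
Your proof is correct and follows essentially the same route as the paper: both apply the localization distinguished triangle $i_* Ri^! T_S \to T_S \to Rj_* j^* T_S \to [1]$, identify the outer terms with $T_Z$ and $T_U$, apply $R\Gamma(S,-)$, and then take cohomology. Your write-up is in fact slightly more explicit in justifying the identifications $Ri^! T_S = T_Z$, $j^* T_S = T_U$ and in naming the induced maps as $i_*$ and $j^*$, but the argument is the same.
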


\begin{proof}
There is an exact triangle in $D(S,\Z_l)$ of the form
$$
\xymatrix{
i_*Ri^! T_S \ar[r] & T_S \ar[r] & (Rj_*)j^*T_S \ar[r] & i_*Ri^! T_S[1]
}
$$
(See \cite[p. 123]{KW}). Applying $R\Gamma(S,\cdot)$ then gives an exact triangle
$$
\xymatrix{
R\Gamma(Z,T_Z) \ar[r] &  R\Gamma(S,T_S) \ar[r] & R\Gamma(U,T_U) \ar[r] & R\Gamma(Z,T_Z)[1]
}
$$
in $D(\Spec k, \Z_l)$. Taking homology then yields the desired long exact sequence.
\end{proof}

Let $G$ be an algebraic group over a separably closed field $k$ and $l$ be a prime different from the characteristic of $k$. 
The $i$-th $l$-adic cohomology group of $BG$ is defined analogously to the case of Chow groups:
Let $(V,U)$ be a good pair for $G$ with $\codim(U^c) > (i+1)/2$ then
$$
H^i(BG,\Z_l)=H^i(U/G,\Z_l).
$$
By using Proposition \ref{PropEtaleHomology} (ii) and Lemma \ref{LeLES} above one shows in the same way as in the case of Chow groups 
(\cite[Definition-Proposition 1]{EG2}), 
that the above definition is independent of the choice of the pair $(V,U)$ provided that $\codim U^c > (i+1)/2$.

\begin{Lemma}
\label{LeCyclemapmu}
 Let $k$ be a field of arbitrary characteristic and $l$ a prime different from char $k$. Assume $(n,\text{char } k)=1$ then the cycle map
$$
A^*_{\mu_{n,k}} \otimes \Z_l \to H^*(B\mu_{n,k^{sep}},\mathbb{Z}_l)
$$
is an isomorphism. 
\end{Lemma}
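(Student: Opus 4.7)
The plan is to compute both sides of the cycle map via a common geometric approximation of $B\mu_n$ and then compare them using the cycle map's compatibility with localization sequences and Chern classes.

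First I would take the standard approximation $V = \A^N_k$ with $\mu_{n,k}$ acting by scalar multiplication and $U = V \setminus \{0\}$, which is a good pair with $\codim(V \setminus U, V) = N$. In degrees $i < N$, both $A^i_{\mu_{n,k}}$ and $H^i(B\mu_{n, k^{sep}}, \Z_l)$ are computed from $U/\mu_n$ over $k$ and $k^{sep}$ respectively. The key geometric observation is that $U/\mu_n$ is naturally the complement of the zero section in the line bundle $\mathcal{L} := \mathcal{O}_{\Pbb^{N-1}}(-n)$: the tautological $\G_m$-torsor $U \to U/\G_m = \Pbb^{N-1}$ is associated to $\mathcal{O}(-1)$, and pushing it out along the $n$-th power map $\G_m/\mu_n \cong \G_m$ amounts to taking the $n$-th tensor power. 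In particular $c_1(\mathcal{L}) = -nh$, where $h = c_1(\mathcal{O}_{\Pbb^{N-1}}(1))$.

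Next I would run parallel localization/Gysin sequences on the two sides. On the Chow side, the localization sequence for the zero section $\Pbb^{N-1} \hookrightarrow \mathcal{L}$, combined with homotopy invariance and the self-intersection formula (so that push-forward along the zero section becomes multiplication by $c_1(\mathcal{L})$), immediately gives $A^*(U/\mu_n) = \Z[h]/(h^N, nh)$. For étale cohomology, I would apply Lemma \ref{LeLES} to the same closed immersion to obtain a long exact sequence in étale homology; translating this into cohomology via Proposition \ref{PropEtaleHomology}(ii) and using that the resulting Gysin map is again multiplication by $c_1(\mathcal{L})$, I would obtain $H^{2*}((U/\mu_n)_{k^{sep}}, \Z_l) = \Z_l[h]/(h^N, nh)$ together with vanishing of odd-degree cohomology in the stable range.

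Finally, since the cycle map is compatible with proper push-forward, flat pull-back and Chern classes (Proposition \ref{PropEtaleHomology}(vi)), it intertwines the two localization sequences and restricts to the classical isomorphism $A^*(\Pbb^{N-1}) \otimes \Z_l \iso H^{2*}(\Pbb^{N-1}_{k^{sep}}, \Z_l)$. A five-lemma comparison then yields the isomorphism on $U/\mu_n$, and letting $N \to \infty$ gives the lemma. The main technical point I expect is verifying that the connecting map in the étale homology long exact sequence of Lemma \ref{LeLES}, under the Poincar\'e duality identification, really is multiplication by $c_1(\mathcal{L})$; this is the standard self-intersection formula for the zero section of a line bundle, and it matches its Chow-theoretic counterpart precisely by the Chern-class compatibility of the cycle map.
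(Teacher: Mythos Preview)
Your proposal is correct and follows essentially the same route as the paper: approximate $B\mu_n$ by the complement of the zero section in $\mathcal{O}_{\Pbb^{N-1}}(\pm n)$, run the localization/Gysin sequence for the zero section on both the Chow and \'etale sides, use the known isomorphism on $\Pbb^{N-1}$ together with the self-intersection formula to identify the push-forward with multiplication by $\pm n h$, and conclude by a diagram chase (your five-lemma). The only cosmetic difference is your choice of sign $\mathcal{O}(-n)$ versus the paper's $\mathcal{O}(n)$, which is a matter of convention and does not affect the argument.
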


\begin{proof}
We may assume $k=k^{sep}$. The natural map 
$$X=(\mathbb{A}_k^{r+1}-\{0\})/\mu_n \to (\mathbb{A}_k^{r+1}-\{0\})/\mathbb{G}_m=\mathbb{P}_k^r$$
is a principal $\mathbb{G}_m$-bundle and the corresponding 
line bundle is given by $\mathcal{O}_{\mathbb{P}^r}(n)$. In other words $B \mu_n$ can be approximated by the complement $X$ of the zero section in 
$\mathcal{O}_{\mathbb{P}^r}(n)$. We will show that the cycle map $A^i(X) \otimes \Z_l \to H^{2i}(X,\Z_l)$
is an isomorphism and $H^i(X,\Z_l)=0$ for odd $i$. Consider the diagram
$$
\xymatrix@C=1em{
A^i(\mathbb{P}^r_k) \otimes \Z_l \ar[r]^-{f_i} \ar[d] & A^{i+1}(\mathcal{O}_{\mathbb{P}^r}(n)) \otimes \Z_l \ar[r] \ar[d] & A^{i+1}(X) \otimes \Z_l \ar[d] \ar[r] & 0  \\
H^{2i}(\mathbb{P}^r_k,\Z_l) \ar[r]^-{g_i} & H^{2i+2}(\mathcal{O}_{\mathbb{P}^r}(n),\Z_l) \ar[r] & H^{2i+2}(X,\Z_l) \ar[r] & H^{2i+1}(\mathbb{P}^r_k,\Z_l)
}
$$
with exact rows. Here the lower row comes from Lemma \ref{LeLES}. This diagram is commutative since the cycle map is compatible with proper push-forward 
and flat pull-back by Proposition \ref{PropEtaleHomology} (vi). It is well known that the first vertical map is an isomorphism. Hence the second vertical map is 
also an isomorphism. Since $H^{2i+1}(\mathbb{P}^r_k,\Z_l)=0$ the first claim follows. 
For the second claim it suffices to see that the $g_i$ or equivalently $f_i$ is injective.
But the composition
$A^i(\mathbb{P}^r_k) \to A^{i+1}(\mathcal{O}_{\mathbb{P}^r}(n)) \xrightarrow{\cong} A^{i+1}(\mathbb{P}^r_k)$ is capping with $c_1(\mathcal{O}_{\mathbb{P}^r}(n))$ 
by the self intersection formula, and under the identification $A^i(\mathbb{P}^r_k)=\Z=A^{i+1}(\mathbb{P}^r_k)$ this corresponds 
to multiplication with $n$.
\end{proof}

Let $R$ be a discrete valuation ring with fraction field $K$ of characteristic $0$ and perfect residue field $k$ of characteristic $p$
and let $X \to \Spec R$ be smooth. By \cite[Expose 10, 7.13-7.16]{SGA6} there exists an etale specialization map
$$
\sigma_{R} \colon H^{i}(X_{\bar{K}},\Z_l) \to H^{i}(X_{\bar{k}},\Z_l)
$$
for $l$ a prime different from $p$ that is compatible with the specialization map for the Chow ring under the cycle map, i.e. the diagram
$$
\xymatrix{
H^{2i}(X_{\bar{K}},\Z_l) \ar[r]^{\sigma_R} & H^{2i}(X_{\bar{k}},\Z_l) \\
A^i(X_K) \ar[u] \ar[r]_{\sigma_R} & A^i(X_k) \ar[u]
}
$$
commutes.

\begin{Lemma}
\label{LeSpezTor}
 Let $G$ be a finite abstract group and $X \to Y$ be a $G$-torsor over $R$. If $\sigma_R \colon H^i(X_{\bar{K}},\Z/l^n) \cong H^i(X_{\bar{k}},\Z/l^n)$ is an isomorphism
 for all $i$, the same is true for $\sigma_R \colon H^i(Y_{\bar{K}},\Z/l^n)\to H^i(Y_{\bar{k}},\Z/l^n)$.
\end{Lemma}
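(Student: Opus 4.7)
My plan is to use the Cartan--Leray (Hochschild--Serre) spectral sequence associated to the finite \'etale $G$-torsor $X\to Y$. For any separably closed base field of characteristic different from $l$ one has a convergent first--quadrant spectral sequence
\[
E_2^{p,q}=H^p(G,H^q(X,\Z/l^n)) \Longrightarrow H^{p+q}(Y,\Z/l^n),
\]
and in particular one such sequence for $X_{\bar K}\to Y_{\bar K}$ and one for $X_{\bar k}\to Y_{\bar k}$. Since $G$ is a constant finite group scheme over $R$, the terms $H^p(G,-)$ are just ordinary abstract group cohomology of $G$ with coefficients in the indicated $\Z/l^n[G]$-module.

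Next I would argue that \'etale specialization induces a morphism of spectral sequences from the $\bar K$--version to the $\bar k$--version, and that the induced map on the abutment coincides with $\sigma_R\colon H^*(Y_{\bar K},\Z/l^n)\to H^*(Y_{\bar k},\Z/l^n)$. Both spectral sequences arise from the simplicial nerve of the torsor, a simplicial scheme over $R$ whose base changes to $\bar K$ and to $\bar k$ are the nerves computing the two sides; applying the specialization functor termwise, together with its compatibility with proper push-forward (and in particular with the finite \'etale quotient $X\to Y$), yields the desired morphism of spectral sequences.

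Having that morphism, the $E_2$ map is $H^p(G,-)$ applied to $\sigma_R\colon H^q(X_{\bar K},\Z/l^n)\to H^q(X_{\bar k},\Z/l^n)$. By hypothesis each such $\sigma_R$ is a $G$-equivariant isomorphism, and group cohomology preserves isomorphisms of coefficient modules, so the map of $E_2$--pages is an isomorphism. Both spectral sequences are first--quadrant, hence strongly convergent with a finite filtration on each $H^n(Y,\Z/l^n)$, so a standard inductive five--lemma argument (over the finitely many nonzero subquotients on the diagonal $p+q=n$) transports the $E_2$--isomorphism to an isomorphism on $E_\infty$ and thence on the abutment. This is exactly the conclusion of the lemma.

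The main obstacle will be the middle step, namely a careful verification that \'etale specialization really does produce a morphism of Cartan--Leray spectral sequences reducing to $\sigma_R$ on the abutment. Once one unpacks the construction of the spectral sequence from the simplicial nerve and uses that specialization commutes with finite pullback and pushforward (which is built into the formalism of SGA~4/SGA~7), the remainder is a formal diagram chase. Everything else -- the form of the spectral sequence, the iso on $E_2$, and the convergence argument -- is automatic.
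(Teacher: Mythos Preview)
Your proposal is correct and follows essentially the same route as the paper: both argue via the Hochschild--Serre spectral sequence for the $G$-torsor, use compatibility of specialization with pull-back to see that $\sigma_R$ on $H^*(X)$ is $G$-equivariant (hence gives an isomorphism on $E_2$), and then conclude by comparison of convergent spectral sequences. The paper's proof is more terse---it simply asserts the existence of the morphism of spectral sequences compatible with $\sigma_R$ on the abutment---whereas you spell out that this comes from applying specialization to the simplicial nerve of the torsor; but the underlying argument is the same.
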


\begin{proof}
 Consider the Hochschild-Serre spectral sequences
 \begin{align*}
  & H^p(G,H^q(X_{\bar{K}},\Z/l^n)) \Rightarrow H^{p+q}(Y_{\bar{K}},\Z/l^n) \\
  & H^p(G,H^q(X_{\bar{k}},\Z/l^n)) \Rightarrow H^{p+q}(Y_{\bar{k}},\Z/l^n). 
 \end{align*}
 Since specialization is compatible with pull-back, the map $\sigma_R \colon H^i(X_{\bar{K}},\Z/l^n)\to H^i(X_{\bar{k}},\Z/l^n)$ is an isomorphism
 of $G$-modules, and it extends to an isomorphism of the spectral sequences compatible with the specialization map for $Y$. The lemma follows. 
\end{proof}

\begin{Corollary}
 \label{CorSpezCoh}
 Let $G$ be a finite abstract group. Then the specialization map $H^*(BG_{\bar{K}},\Z_l) \to H^*(BG_{\bar{k}},\Z_l)$ is an isomorphism
\end{Corollary}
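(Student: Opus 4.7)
The plan is to approximate $BG$ by quotients $U/G$, reduce the statement to the iso on such approximations via Hochschild--Serre, and then handle the approximations via purity together with the trivial fact that specialization on affine space is an iso.

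Fix a degree $i$ and a level $n$. Since $G$ is finite it embeds into $\GL_N(\Z)$ via the regular representation, hence acts linearly on $V=\mathbb{A}^N_R$. Taking a large enough number of copies $V^m$, one finds a $G$-stable open subscheme $U\subset V^m$ on which $G$ acts freely and with $\codim(V^m\smallsetminus U)$ as large as desired; in particular larger than $(i+1)/2$. Then by definition $H^i(BG_{\bar K},\Z/l^n)=H^i(U_{\bar K}/G,\Z/l^n)$ and similarly over $\bar k$, and $U\to U/G$ is a finite etale $G$-torsor over $R$, so specialization is defined at each stage and is compatible with pullback.

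First I show that $\sigma_R\colon H^j(U_{\bar K},\Z/l^n)\to H^j(U_{\bar k},\Z/l^n)$ is an iso for all $j\le i$. Let $Z=V^m\smallsetminus U$. The localization long exact sequence
\[
\cdots\to H^j_Z(V^m,\Z/l^n)\to H^j(V^m,\Z/l^n)\to H^j(U,\Z/l^n)\to H^{j+1}_Z(V^m,\Z/l^n)\to\cdots
\]
is compatible with $\sigma_R$ (both ends on $V^m$ behave well under specialization, and specialization commutes with the connecting maps of localization triangles). By semi-purity for the smooth pair $(V^m,Z)$, valid since $l\ne\cchar(k)$, one has $H^j_Z(V^m,\Z/l^n)=0$ for $j<2\codim(Z)$, so the restriction $H^j(V^m)\to H^j(U)$ is an isomorphism for $j\le i$. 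Since $V^m\cong\mathbb{A}^{Nm}_R$ has trivial cohomology over both $\bar K$ and $\bar k$ and specialization on $\mathbb{A}^{Nm}$ is manifestly an iso, the claim for $U$ follows.

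Second I run the Hochschild--Serre spectral sequences
\[
E_2^{p,q}(\bullet)=H^p(G,H^q(U_\bullet,\Z/l^n))\Rightarrow H^{p+q}(U_\bullet/G,\Z/l^n),\qquad \bullet\in\{\bar K,\bar k\},
\]
exactly as in Lemma~\ref{LeSpezTor}. Specialization induces a morphism of spectral sequences, which by the first step is an iso on $E_2^{p,q}$ whenever $q\le i$. The point is that only such terms matter: every differential $d_r$ affecting $E_\infty^{p,q}$ in total degree $p+q=i$ has source or target of $q$-index at most $i$, so a termwise comparison of the two spectral sequences yields an iso on $E_\infty^{p,q}$ for $p+q=i$, and hence on $H^i(U/G,\Z/l^n)=H^i(BG,\Z/l^n)$. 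Passing to $\Z_l$-coefficients is then routine: each $H^i(BG_\bullet,\Z/l^n)$ is finite, so $\varprojlim_n$ is exact and preserves the iso.

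The main obstacle is the bookkeeping in the third step: the finite group $G$ has infinite $l$-cohomological dimension, so Lemma~\ref{LeSpezTor} as stated cannot be applied with a finite-dimensional $U$. One must instead observe that for each fixed output degree $i$ only finitely many $H^q(U)$, all with $q\le i$, enter the spectral sequence computation, and then choose the approximation $U$ so that specialization is an iso on exactly this range. Everything else (purity, commutativity with the cycle/specialization maps, finiteness of intermediate groups) is standard.
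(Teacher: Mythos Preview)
Your argument is correct and follows essentially the same route as the paper: approximate $BG$ by $U/G$ for $U$ open in a representation, show specialization is an isomorphism on $H^j(U)$ in the relevant range, and then apply Hochschild--Serre. The paper's version is terser---it invokes Lemma~\ref{LeLES} (the homology localization sequence) in place of your cohomology-with-supports/semi-purity formulation, and then simply cites Lemma~\ref{LeSpezTor}---but the content is the same.

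One point worth noting: your final paragraph correctly identifies a wrinkle that the paper leaves implicit. Lemma~\ref{LeSpezTor} literally assumes the specialization map is an isomorphism on $H^i(X)$ for \emph{all} $i$, whereas any finite-dimensional approximation $U$ only gives this in a bounded range. Your fix---observing that for a first-quadrant spectral sequence the computation of $E_\infty^{p,q}$ with $p+q=i$ only involves $E_2$-terms with $q$-index $\le i$, so an iso on $H^q(U)$ for $q\le i$ suffices---is right. (If you want to make the induction on $r$ airtight, the cleanest phrasing is via truncation: the map $R\Gamma(G,\tau_{\le i}R\Gamma(U))\to R\Gamma(G,R\Gamma(U))$ is an isomorphism on $H^n$ for $n\le i$, since the cone $R\Gamma(G,\tau_{>i}R\Gamma(U))$ has vanishing $E_2^{p,q}$ for $q\le i$ and hence vanishing $H^n$ for $n\le i$.) The paper's ``since we can choose this codimension arbitrarily high'' is tacitly invoking exactly this observation.

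A minor remark on your Step~1: you only use that specialization commutes with restriction along the open immersion $U\hookrightarrow V^m$, not the full compatibility with the localization triangle; the vertical arrows $H^j(V^m)\to H^j(U)$ are already isomorphisms on each fiber separately by semi-purity, so the commutative square with specialization on top and bottom does the job.
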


\begin{proof}
Choose a good pair $(E,U)$ for $G_R$. Then $\sigma_R \colon H^i(U_{\bar{K}},\Z_l) \to H^i(U_{\bar{k}},\Z_l)$ is an isomorphism for all $i < 2 \codim(E-U)-1$
by Lemma \ref{LeLES}. Since we can choose this codimension to be arbitrary high the assertion follows from the previous Lemma.
\end{proof}

\begin{Remark}
 It follows in fact from the Hochschild-Serre spectral sequence that $H^i(BG_{k^{sep}},\Z_l)=H^i(G,\Z_l)$, where $G$ is a finite abstract group and
 $l$ is a prime different from the characteristic of $k$. For this note that if $U$ is open in a representation $V$ of $G$ then
 $H^q(U,\Z/l^n)=0$ for $0 < q < 2\codim(U^c,V)-1$ and $H^0(U,\Z/l^n)=\Z/l^n$.
\end{Remark}

\section{Specialization for Wreath Products}

We say that a scheme admitts a cell decomposition if it can be stratified into a finite disjoint union of open subsets of affine spaces.
Our goal is to prove the following proposition which is inspired by \cite[Lemma 8.1]{To}, a variant of which is stated in Lemma \ref{LeSplitInjective} below.

\begin{Proposition}
\label{PropWP}
Let $p \neq l$ be prime numbers. Assume $G$ is a finite abstract group satisfying the following conditions:
\begin{equivlist}
 \item The specialization map $A^*_{G_{\bar{\Q}_p}} \otimes \Z_l \to A^*_{G_{\bar{\F}_p}} \otimes \Z_l$ is an isomorphism.
 \item $BG_{\bar{\Q}_p}$ and $BG_ {\bar{\F}_p}$ can be approximated by schemes admitting a cell decomposition.
 \item The cycle map $A^*_{G_{\bar{\Q}_p}} \otimes \Z_l \to H^*(BG_{\bar{\Q}_p},\Z_l)$ is split injective. 
\end{equivlist}
Then the same conditions hold for the wreath product $\Z/l \wr G$.
\end{Proposition}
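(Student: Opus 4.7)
The plan is to build one geometric approximation of $BH$ for $H := \Z/l \wr G$ over the DVR $R$ and extract all three properties from it. Fix a free-$G$ approximation $U \to Y := U/G$ of $BG$ defined over $R$, chosen so that $Y_{\bar{\Q}_p}$ and $Y_{\bar{\F}_p}$ admit cell decompositions (hypothesis (ii) for $G$). Let $V^* := \A^{N+1}_R \setminus \{0\}$ carry the $\Z/l$-action by multiplication by a primitive $l$-th root of unity, so that $W := V^*/(\Z/l)$ approximates $B(\Z/l)$ in codimensions $<N$ and is cell-decomposable (cf.\ the proof of Lemma \ref{LeCyclemapmu}). Since $H$ sits in the extension $1 \to G^l \to H \to \Z/l \to 1$ and acts freely on $V^* \times U^l$, the scheme
\[
X_H := (V^* \times Y^l)/(\Z/l),
\]
with $\Z/l$ acting on $V^*$ as above and on $Y^l$ by cyclic permutation of factors, is a smooth approximation of $BH$ over $R$ presented as a free $\Z/l$-quotient of $V^* \times Y^l$.

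For (ii), I would stratify $V^* \times Y^l$ by products of cells $V^*_i \times C_{\alpha_1} \times \cdots \times C_{\alpha_l}$ and analyze orbits under $\Z/l$. Since $l$ is prime, an $l$-tuple $\alpha$ has orbit of size $l$ (non-constant) or $1$ (diagonal $\alpha = (\beta,\ldots,\beta)$); orbits of size $l$ descend to a single cell of $X_H$ via a representative slice. A diagonal orbit contributes the stratum $(V^*_i \times C_\beta^l)/(\Z/l)$, which is an associated bundle for the free $\Z/l$-cover $V^*_i \to V^*_i/(\Z/l)$ with fiber $C_\beta^l \subset \A^{lm}$; the ambient associated vector bundle is trivial because the base $V^*_i/(\Z/l)$ has vanishing Picard (it is itself a trivial $\G_m$-bundle over an affine cell of $\Pbb^N$), so this stratum is open in an affine space and hence a cell.

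For (i), the cell decompositions of $V^*$ and $Y$ make Chow of $V^* \times Y^l$ generated by cell closures that are already defined over $R$; therefore the specialization $\sigma_R$ is a bijection on these generators and an isomorphism after $\otimes \Z_l$, using hypothesis (i) for $G$ (specialization for $V^*$ is trivial in low codimension). The descent of this isomorphism through the free $\Z/l$-quotient to $X_H$ follows from either the identification $A^*X_H = A^*_{\Z/l}(V^* \times Y^l)$ together with a Hochschild--Serre-type spectral sequence that specializes term-by-term, or from the cycle-level description of $\sigma_R$ and its commutation with proper push-forward along the $\Z/l$-cover. For (iii), split injectivity of the cycle map for $BG_{\bar{\Q}_p}$ (the hypothesis) passes to $BG^l_{\bar{\Q}_p}$ by K\"unneth in both Chow and \'etale homology (Proposition \ref{PropEtaleHomology}(v)); the descent to $X_H$ is exactly the content of the variant of Totaro's Lemma 8.1 given as Lemma \ref{LeSplitInjective}, whose cell-decomposition hypothesis is furnished by (ii) for $H$.

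The main obstacle is this descent of split injectivity through the free $\Z/l$-quotient in (iii): at the prime $l$ itself the usual transfer-plus-restriction argument fails, so one must rely on the explicit cell decomposition of $X_H$ and on Lemma \ref{LeSplitInjective}. Step (ii) is also delicate (separating free from diagonal orbits and analyzing the associated bundle on the diagonal strata), but reduces to standard facts about $\G_m$-bundles over affine cells; step (i) is then essentially a formal consequence of naturality of specialization.
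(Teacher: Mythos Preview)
Your treatment of (ii) is in the right direction and essentially what the paper outsources to \cite[Lemma~8.1(2)]{To}. The genuine gap is your argument for (i). Neither descent mechanism you propose works at the prime $l$: there is no Hochschild--Serre spectral sequence computing Chow groups of a free $\Z/l$-quotient from those of the cover, and proper push-forward along the degree-$l$ cover $\pi$ only recovers $l$ times a class (since $\pi_*\pi^*=l$), so it cannot see the $l$-primary part of $A^*(X_H)$ that is invisible in $A^*(V^*\times Y^l)$. Concretely, Totaro's classes $\gamma_i(x)$ satisfy $l\gamma_i(x)=\pi_*(x^{\otimes l})$ but are not themselves in the image of $\pi_*$; an isomorphism upstairs after $\otimes\,\Z_l$ simply does not imply one downstairs. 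You correctly flag this obstruction for (iii) but it bites just as hard in (i).

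The paper avoids any such descent. It uses Totaro's functor $F_l$ and the operations $\gamma,\alpha,\otimes$ to build a \emph{surjection} $\Psi\colon F_l(A_*S)\twoheadrightarrow A_*((S^l-S)/(\Z/l))$ (this is \cite[Lemma~8.1(2)]{To}, and it is here, not in Lemma~\ref{LeSplitInjective}, that the cell-decomposition hypothesis enters), proves in Lemma~\ref{LeCommutative} that $\Psi$ commutes with specialization, and then invokes Lemma~\ref{LeSplitInjective} together with the fact that specialization on \'etale cohomology \emph{is} an isomorphism (Corollary~\ref{CorSpezCoh}, where Hochschild--Serre is available). The resulting sandwich $F_l\otimes\Z_l\twoheadrightarrow A^*\otimes\Z_l\to H^*$, whose composite is split injective over $\bar{\Q}_p$ and whose outer terms specialize isomorphically, forces $\Psi$ to be bijective on both fibres and hence the middle specialization map to be an isomorphism. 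Note that your invocation of Lemma~\ref{LeSplitInjective} for (iii) is also slightly off: that lemma yields split injectivity of the composite $F_l\to A^*\to H^*$, not of the cycle map $A^*\to H^*$ itself; the latter only follows once $\Psi$ is known to be bijective. Thus (i) and (iii) are obtained simultaneously from the same sandwich argument, not independently.
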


In order to prove this proposition we need to say something about the cyclic product of a quasi-projective scheme, since these are the 
spaces that approximate $B(\Z/l \wr G)$. 

\textit{Cyclic Products.} Let $S$ be a quasi-projective scheme over an arbitrary field $k$. Let $l$ be a prime. Consider the permutation action of $\Z/l$ on $S^l$.
Since $S$ is quasi-projective the geometric quotient $S^l/(\Z/l)$ exists.
If we take out the diagonal of $S^l$, then the action of $\Z/l$ on $S^l-S$ is free and 
$$\pi \colon S^l-S \to (S^l-S)/(\Z/l)$$ 
is a principal bundle quotient with structure group $\Z/l$. We will write $X=S^l-S$, $Y=(S^l-S)/(\Z/l)$ and $Z^lS=S^l/(\Z/l)$ and call $Z^lS$ the cyclic product of $S$. 
Note that $\pi$ is finite, etale of degree $l$. 

Assume $(l,\text{char }k)=1$ and $k$ contains the $l$-th roots of unity. Fix a character $\mu_l \hookrightarrow \mathbb{G}_m$ and let $c_1 \in A^1 Y$ 
be its first Chern class. For $i>0$ and $j\leq il-1$ we consider the operations 
$$
\gamma_i \colon A_i(S) \to A_{li}(Y), \quad \alpha_i^j \colon A_i(S) \to A_j(Y)
$$
constructed in \cite[Section 7]{To}, where $\alpha_i^j=c_1^{li-j} \gamma_i$. We have 
$$\pi^* \gamma_i(a)=a^{\otimes l}|_X, \text{ hence } l \gamma_i(a)=\pi_*(a^{\otimes l}|_X),$$
and $\alpha_i^j$ is a homomorphism of abelian groups. 

Let us recall the construction of $\gamma_i$. 
Let $C\in Z_*(S)$ be a cycle on $S$ of dimension greater than $0$. Then the support of $C^l$ is not contained in the diagonal of $S^l$ and we may consider the
restriction $C^l-C$ of $C^l$ to a cycle on $X=S^l-S$. The cycle $C^l-C$ is invariant under the action of $\Z/l$ and hence is the pull-back of a unique cycle $Z^l(C)-C$
on $Y$ under the etale map $X \to Y$. This defines a map $Z_{\geq 1}(S) \to Z_{\geq l}(Y)$ which passes through rational equivalence (see loc. cit.) and induces the maps
$\gamma_i$. Note that $\gamma_i$ is not additive. More precisely, let $x,y \in A_i(X)$ then by \cite[Lemma 7.1 (3)]{To} we have
\begin{equation} \label{EqSum}
\gamma_i(x+y)=\gamma_i(x) + \sum \pi_*(\alpha_1 \otimes \ldots \otimes  \alpha_l) + \gamma_i(y),
\end{equation}
where the sum is over a set of representatives $(\alpha_1,\ldots,\alpha_l)$ of the $\Z/l$-orbits in $\{x,y\}^l - \{(x,\ldots,x),(y,\ldots,y)\}$.

\begin{Remark}
 In fact, Totaro's operations map into the Chow group of $Z^lS$, so our operations are Totaro's composed with the pull-back to the open subset $Y$ in $Z^lS$.
However, in the end we will only be interested in the Chow group resp.\ homology of $Y$ for dimension greater than $\dim S$ resp.\ $2 \dim S$ and in this range
the Chow group resp.\ homology of $Z^lS$ and $Y$ coincide. 
\end{Remark}

Totaro then defines a functor $F_l$ from graded abelian groups to graded abelian groups in the following way.
Let $A_*$ be a graded abelian group. Then $F_l(A_*)$ is the graded abelian group generated by the graded abelian group $A_*^{\otimes l}$ 
together with $A_i \otimes Z/l$ in degree $j$ for $i+1\leq j \leq li-1$ and elements $\gamma_i x_i$ in degree $li$ for $x \in A_i$ and $i>0$ subject to the relations

\begin{align*}
 x_1 \otimes \ldots \otimes x_l &= x_2 \otimes \ldots \otimes x_l \otimes x_1 \\
 l\gamma_ix &= x^{\otimes l} \\
 \gamma_i(x+y) &= \gamma_i x + \sum_{\alpha} \alpha_1 \otimes \ldots \otimes \alpha_l + \gamma_i(y).
\end{align*}
Here $\alpha$ runs through the $\Z/l$-orbits in $\{x,y\}^l - \{(x,\ldots,x),(y,\ldots,y)\}$. 
If $A_*$ is isomorphic to a finite direct sum $\bigoplus_{i=1}^n \Z/a_i \cdot e_i$, where each $e_i$ is homogeneous and $a_i$ is $0$ or a prime power,
we can give a more precise description of $F_l(A_*)$ in the following way. Let $R$ be the set of $i$ such that $\deg e_i>0$ and $a_i=0$ or a power of $l$, then 
$$
F_l(A_*)=\bigoplus_{\underline{i} \in (\{1,\ldots,n\}^l-R)/(\Z/l)}  \Z/(a_1,\ldots,a_l)  \cdot e_{i_1}\otimes \ldots \otimes e_{i_l} \oplus
\bigoplus_{i\in R} \Z/(la_i) \cdot \gamma(e_i)
$$
$$
 \oplus \bigoplus_{\stackrel{i \in R}{\dim e_i < j < l\dim e_i}} \Z/l \cdot \alpha^j(e_i) 
$$
where again $\{1,\ldots,n\}^l-R$ denotes the complement of $R$ embedded diagonally in $\{1,\ldots,n\}^l$. 
Using the operations 
\begin{align*}
 \otimes l &\colon A_i(S) \to A_{li}(X) \xrightarrow{\pi_*} A_{li}(Y) \\
 \gamma_i &\colon A_i(S) \to A_{il}(Y) \\
 \alpha_i^j &\colon A_i(S) \to A_j(Y)  
\end{align*}
we obtain a homomorphism of graded abelian groups
$$
\Psi_k \colon F_l(A_*(S)) \to A_*(Y).
$$
We shall need one last piece of notation.
For a scheme $S$ and $r \in \N$ we write $F^{< r}_l(S)$ for the subgroup of elements of $F_l(A_*S)$ of degree $> l \dim S -r$.
Clearly $F^{< r}_l(S)=F_l(A^{< r}S)$. 

\begin{Lemma}
 \label{LeCommutative}
 Let $S$ be a smooth quasi-projective scheme over $\Z_p$. Then the diagram
 $$
 \xymatrix{
 F_l^{< r}(S_{\bar{\Q}_p}) \ar[r]^{\Psi_{\bar{\Q}_p}} \ar[d] & A^{< r}(Y_{\bar{\Q}_p}) \ar[d]  \\
 F_l^{< r}(S_{\bar{\F}_p}) \ar[r]^{\Psi_{\bar{\F}_p}} & A^{< r}(Y_{\bar{\F}_p}) 
 }
 $$
 commutes. Here the vertical maps are given by specialization.
\end{Lemma}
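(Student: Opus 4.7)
The plan is to show that each of the three building-block operations $\otimes l$, $\gamma_i$, $\alpha_i^j$ out of which $\Psi$ is assembled commutes with specialization; the assertion for $\Psi$ on $F_l^{<r}$ then follows from the universal property of $F_l$, provided we also observe that specialization respects the defining relations (the cyclic symmetry of the $l$-fold tensor, the relation $l\gamma_i x = x^{\otimes l}$, and the addition formula). This last point is automatic once the three operations themselves are compatible with specialization.

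For $\otimes l$, write the operation as the composition of the external product $A_*(S)\to A_*(S^l)$, the flat restriction to the open subset $X=S^l-S$, and proper push-forward along the finite \'etale map $\pi\colon X\to Y$. Each of these commutes with specialization by the standard formalism of \cite[\S20.3]{Fu}. For $\alpha_i^j=c_1^{li-j}\gamma_i$, once $\gamma_i$ is handled, compatibility follows because the character $\mu_l\hookrightarrow \G_m$ and the associated line bundle on $Y$ are defined over $\Z_p$, and specialization is a ring map that respects Chern classes.

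The substantive step is $\gamma_i$. I would argue at the cycle level. An integral subvariety $V\subset S_{\bar{\Q}_p}$ of positive dimension is defined over some finite extension $K/\Q_p$ with ring of integers $R$ and residue field $k'$; let $\bar V\subset S_R$ denote its closure, so that $\sigma_R([V])=[\bar V_{k'}]$. The cycle $\bar V^{\,l}-\bar V$ on $X_R=S_R^l-S_R$ is $\Z/l$-invariant with free action, and because $\pi_R\colon X_R\to Y_R$ is finite \'etale of constant degree $l$, it descends to a unique cycle on $Y_R$ whose generic fibre is $\gamma_i([V])$ and whose special fibre is $\gamma_i([\bar V_{k'}])=\gamma_i(\sigma_R[V])$. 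In other words, specialization on $Y$ commutes with the descent operation defining $\gamma_i$, because taking closures of integral subvarieties is preserved by the \'etale quotient. The non-additivity of $\gamma_i$ is not a problem: the error terms in $\gamma_i(x+y)=\gamma_i(x)+\sum \pi_*(\alpha_1\otimes\cdots\otimes\alpha_l)+\gamma_i(y)$ involve only $\otimes l$-type push-forwards, already known to commute with specialization, so the identity propagates from integral generators to arbitrary cycle classes.

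The main obstacle is exactly this descent argument for $\gamma_i$, since $\gamma_i$ is neither linear nor expressible as a composition of standard functorial operations on Chow groups, so one cannot quote \cite[\S20.3]{Fu} directly and must instead work at the level of cycles and their scheme-theoretic closures over $R$. Once this is in place, the rest of the proof is formal.
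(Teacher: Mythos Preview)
Your proposal is correct and follows essentially the same approach as the paper: reduce to the three operations, dispatch $\otimes l$ and $\alpha_i^j$ by standard compatibilities of specialization with exterior products, proper push-forward, flat pull-back and Chern classes, and handle the non-linear $\gamma_i$ at the cycle level by reducing via the addition formula to fundamental classes $[V]$ and then comparing closures over the DVR. The paper's proof is organized identically, with the only cosmetic difference that it explicitly invokes geometric integrality of $V$ to ensure $(\bar V^l-\bar V)/(\Z/l)$ is a subvariety of $Y_R$ dominating $\Spec R$, which is implicit in your ``taking closures of integral subvarieties is preserved by the \'etale quotient.''
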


\begin{proof}
 First we note that the exterior product map is compatible with specialization meaning that the diagram
 $$
 \xymatrix{
 A_*(S_K \times_K S_K) \ar[r]^{\sigma_R} & A_*(S_k \otimes_k S_k) \\
 A_*(S_K) \otimes A_*(S_K) \ar[u]^{\times} \ar[r]_{\sigma_R^{\otimes 2}}  & A_*(S_k) \otimes A_*(S_k) \ar[u]_{\times}
 }
 $$
 commutes. This follows easily from the definition of the specialization map on the level of cycles. Moreover, push-forward and intersecting with 
 Chern classes of line bundles are compatible with pull-back and refined Gysin homomorphisms. Hence from the definition of the maps $\Psi_{\bar{\Q}_p}$ and         
 $\Psi_{\bar{\F}_p}$ we see that it suffices to show that the operation $\gamma$ is compatible with specialization. 
 
 Using equation \eqref{EqSum} we see it suffices to prove this for a fundamental cycle $x=[V] \in A_i(S_{\bar{\Q}_p})$.
 For this consider a finite extension $K$ of $\Q_p$ such that $V$ is defined over $K$.
 Let $A$ be the integral closure of $\Z_p$ in $K$ and $k$ the residue field of $A$. 
 Replace $S$ by $S_A$ and write $Y=(S^l-S)/(\Z/l)$. We have to see $\gamma_{k}(\sigma_A([V]))=\sigma_A(\gamma_{K} [V])$.
 Let $\bar{V}$ be the closure of $V$ in $S$, then $\sigma_A([V])=[\bar{V}_k]$. It follows $\gamma_{k}(\sigma_A([V]))=[(\bar{V}_k^l-\bar{V}_k)/(\Z/l)]$.
 Now since $V$ is geometrically integral $(\bar{V}^l-\bar{V})/(\Z/l)$ is again a subvariety of $Y$ mapping dominantly to $\Spec A$
 and resitricting to $(V^l-V)/(\Z/l)$. It follows $\sigma_A(\gamma_{K} [V])=[(\bar{V}_k^l-\bar{V}_k)/(\Z/l)]$ as wanted.
\end{proof}

\begin{Lemma}
 \label{LeSplitInjective}
 Let $S$ be a smooth quasi-projective scheme over $\bar{\Q}_p$ and $r \leq \dim S$. Assume that $A^{< r}(S) \otimes \Z_l \to H^{<2r}(S,\Z_l)$ is split injective.
 Then the composition
 $$
 F_l^{<r}(S) \otimes \Z_l \to A^{<r}(Y) \otimes \Z_l \to H^{<2r}(Y,\Z_l)
 $$
 is split injective.
\end{Lemma}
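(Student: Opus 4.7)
The plan is to follow Totaro's template from \cite[Lemma 8.1]{To}, substituting $l$-adic etale homology for Borel--Moore homology. First I would construct etale-homology analogs of the operations $\otimes l$, $\gamma_i$ and $\alpha_i^j$ going from $H_*(S,\Z_l)$ to $H_*(Y,\Z_l)$: the exterior product via Kunneth (Proposition \ref{PropEtaleHomology}(v)) followed by restriction to $X = S^l-S$ and pushforward along $\pi$; the Chern-class operations $\alpha_i^j$ by capping with powers of $c_1 \in H^2(Y,\Z_l)$ of the tautological character $\mu_l \hookrightarrow \G_m$; and $\gamma_i$ by observing that $a^{\otimes l}|_X$ is $\Z/l$-equivariant and hence descends along the degree-$l$ etale cover $\pi$ up to multiplication by $l$, using Proposition \ref{PropEtaleHomology}(iv). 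Since these operations satisfy the same universal relations as their Chow-theoretic counterparts, they assemble into a graded morphism $\Psi_H \colon F_l(H_*(S,\Z_l)) \to H_*(Y,\Z_l)$, which by Proposition \ref{PropEtaleHomology}(vi) makes the square
$$
\xymatrix{
F_l^{<r}(A^*(S) \otimes \Z_l) \ar[r]^{\Psi} \ar[d] & A^{<r}(Y) \otimes \Z_l \ar[d] \\
F_l^{<r}(H^{2*}(S,\Z_l)) \ar[r]^{\Psi_H} & H^{<2r}(Y,\Z_l)
}
$$
commute.

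Since $F_l$ is a functor on graded abelian groups, a retraction of the hypothesized split injection $A^{<r}(S) \otimes \Z_l \hookrightarrow H^{<2r}(S,\Z_l)$ induces a retraction of the left vertical map above; in particular this left vertical map is itself split injective. It will then suffice to show that $\Psi_H$ in the bottom row is split injective, since composing a retraction of $\Psi_H$ with a retraction of the left vertical produces the retraction required of the original composition.

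The main step is therefore a direct computation of $H^{<2r}(Y,\Z_l)$ in terms of $H^*(S,\Z_l)$. I would first use Kunneth (Proposition \ref{PropEtaleHomology}(v)) together with the long exact sequence of Lemma \ref{LeLES} applied to $X = S^l - S$ inside $S^l$ to describe $H^*(X,\Z_l)$ from $H^*(S,\Z_l)$, noting that the assumption $r \leq \dim S$ keeps us in a range where the closed stratum (the diagonal $S$) contributes only beyond the cutoff and so does not enter the decomposition. Then I would use the Hochschild--Serre spectral sequence for the finite etale $\Z/l$-cover $\pi$, together with the standard computation of $H^*(\Z/l,\Z_l)$ (which is controlled by the Chern class $c_1$ since $\mu_l \subset \bar\Q_p$), to pass from $H^*(X,\Z_l)$ to $H^*(Y,\Z_l)$.

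The hard part will be the last step: matching the resulting decomposition of $H^{<2r}(Y,\Z_l)$ term-by-term with the explicit presentation of $F_l^{<r}(H^{2*}(S,\Z_l))$ by tensor-power classes, $\gamma_i$-classes and $\alpha_i^j$-classes, so as to recognise $\Psi_H$ as an isomorphism in this range (and hence automatically split injective). Every ingredient (Kunneth, the localization long exact sequence, Hochschild--Serre, the cycle map and compatibility with Chern classes) is collected in Proposition \ref{PropEtaleHomology} and Lemma \ref{LeLES}, so the strategy is parallel to Totaro's computation in \cite[Section 8]{To}, but the bulk of the work lies in the combinatorial book-keeping of orbits under the cyclic permutation action.
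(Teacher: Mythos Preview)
Your plan is to redo Totaro's argument from \cite[Lemma 8.1]{To} with $l$-adic \'etale homology in place of Borel--Moore homology. The paper takes a much shorter route that exploits the fact that $S$ is defined over a field of characteristic zero: fix an isomorphism $\bar{\Q}_p \cong \C$ and invoke the comparison theorem \cite[XI]{SGA4} to identify $H^{<2r}(Y,\Z_l)$ with $H_{sing}^{<2r}(Y(\C),\Z) \otimes \Z_l = H^{BM}_{>2l\dim S - 2r}(Y(\C),\Z) \otimes \Z_l$. The hypothesis $r \leq \dim S$ guarantees $2l\dim S - 2r \geq 2\dim S$, so in this range the Borel--Moore homology of $Y$ coincides with that of $Z^l S$, and Totaro's Lemma \ref{LeTo} then applies directly after tensoring with $\Z_l$. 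Nothing needs to be rebuilt in \'etale cohomology; the characteristic-zero assumption on the base is precisely what lets one import Totaro's topological result wholesale.

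Your strategy could in principle be made to work, but it is substantially more laborious, and two of its steps are less routine than you indicate. First, your construction of $\gamma_i$ on \'etale homology is incomplete: knowing that $a^{\otimes l}|_X$ is $\Z/l$-invariant together with $\pi_*\pi^* = l$ only yields $\pi_*(a^{\otimes l}|_X) = l\gamma_i(a)$, not an actual class $\gamma_i(a)$ satisfying $\pi^*\gamma_i(a)=a^{\otimes l}|_X$. Producing such a class on the nose is essentially the content of a Nakaoka/Steenrod-type decomposition of $H_*(Z^l S)$, which is the nontrivial topological input Totaro is relying on and which you would have to reprove in the \'etale setting. Second, the map $\Psi_H$ restricted to $F_l^{<r}(H^{2*}(S,\Z_l))$ will generally not be an isomorphism onto $H^{<2r}(Y,\Z_l)$: odd-degree cohomology of $S$ also feeds into the target via K\"unneth and Hochschild--Serre, so at best you get a split injection, and exhibiting the complement again amounts to the full Nakaoka decomposition. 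The paper sidesteps all of this by transporting to $\C$.
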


This Lemma is a variant of \cite[Lemma 8.1 (3)]{To}:

\begin{Lemma}(Totaro)
\label{LeTo}
Let $S$ be a quasi-projective scheme over $\C$. If the cycle map $A_*(S) \to H_*^{BM}(S,\Z)$ is split injective, then the composition
$F_l(S) \to A_*(Z^lS) \to H_*^{BM}(Z^lS,\Z)$ is split injective.
\end{Lemma}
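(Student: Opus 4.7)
The plan is to build a splitting of $cl_{Z^lS} \circ \Psi$ by mirroring the construction of $\Psi$ on the Borel--Moore side to obtain a map $\Phi \colon F_l(H_*^{BM}(S,\Z)) \to H_*^{BM}(Z^lS, \Z)$, showing that $\Phi$ is split injective, and then transporting the hypothesized splitting on $S$ through the functor $F_l$ applied termwise.

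First I would construct $\Phi$. Singular Borel--Moore homology of complex varieties admits exterior products, proper push-forwards, and cap products with first Chern classes, so the Borel--Moore analogues $\otimes l^H$, $\gamma_i^H$, and $\alpha_i^{j,H} = c_1^{li-j}\gamma_i^H$ can be defined by exactly the same geometric recipe used at the cycle level for $\Psi$: exterior $l$-th power, restriction to the off-diagonal $X = S^l - S$, descent along the étale $\Z/l$-cover $\pi \colon X \to Y$, and capping with $c_1$ of the line bundle attached to the $\mu_l$-torsor $\pi$. Since those recipes are what make $\Psi$ a well-defined homomorphism out of $F_l$, the identical verifications show the Borel--Moore versions satisfy the defining relations of $F_l$ and assemble into a homomorphism $\Phi$; naturality of the cycle map under exterior product, proper push-forward and cap product then yields a commutative square involving $\Psi$, $\Phi$, $F_l(cl_S)$ and $cl_{Z^lS}$.

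Second, I would prove that $\Phi$ is split injective. This is Totaro's argument for \cite[Lemma 8.1 (3)]{To} transported to the Borel--Moore setting. The inputs are (a) the long exact Borel--Moore sequence attached to the closed diagonal $S \hookrightarrow Z^lS$ with open complement $Y$; (b) the Künneth formula identifying $H_*^{BM}(S^l,\Z)$ with $H_*^{BM}(S,\Z)^{\otimes l}$ up to $\Tor$; and (c) the transfer identity $\pi_*\pi^* = l$ for the degree-$l$ étale cover $\pi$. Decomposing $H_*^{BM}(S^l,\Z)$ under the permutation action of $\Z/l$ and analysing how each summand enters $H_*^{BM}(Y,\Z)$ produces matching summands for the three families of generators of $F_l$: the off-diagonal $\Z/l$-orbits contribute the $\otimes l$ part, the diagonal contributes the $\gamma$ part (rescaled by $l$ via the transfer identity), and the $l$-torsion forced by the cyclic cover contributes the $\alpha^j$ part. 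Projection onto these explicit summands yields a splitting $r \colon H_*^{BM}(Z^lS,\Z) \to F_l(H_*^{BM}(S,\Z))$ of $\Phi$.

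Finally, the hypothesized splitting $s \colon H_*^{BM}(S,\Z) \to A_*(S)$ of the cycle map is functorial under $F_l$ applied termwise (the defining relations of $F_l$ are preserved by any homomorphism of graded abelian groups), so $F_l(s)$ splits $F_l(cl_S)$. The composition $F_l(s) \circ r \colon H_*^{BM}(Z^lS,\Z) \to F_l(A_*(S))$ then splits $cl_{Z^lS} \circ \Psi$, which is the claim. The main obstacle is Step 2: producing the splitting $r$ with integer coefficients requires careful bookkeeping of the $l$-torsion in $H_*^{BM}(Y,\Z)$ generated by the $\mu_l$-cover $\pi$, which is precisely why the generators $\alpha^j$ appear in $F_l$ and why the functor $F_l$ has the shape it does.
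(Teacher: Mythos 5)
A caveat on the comparison first: the paper does not prove this lemma at all --- it is quoted from Totaro (\cite[Lemma 8.1 (3)]{To}) and used as a black box to deduce Lemma \ref{LeSplitInjective} via the comparison theorem --- so the only meaningful comparison is with Totaro's own argument. Your three-step architecture (a Borel--Moore analogue $\Phi$ of $\Psi$, split injectivity of $\Phi$ via localization, K\"unneth and transfer, and transport of the splitting $s$ of $cl_S$ through the functor $F_l$) is indeed the shape of that argument, and your Step 3 is correct as stated: $F_l$ is a functor on graded abelian groups, so $F_l(s)\circ F_l(cl_S)=\mathrm{id}$, and $F_l(s)\circ r$ retracts $cl_{Z^lS}\circ\Psi$ once the square commutes.

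The genuine gap is in Step 1. At the cycle level $\gamma$ is defined by descent: a $\Z/l$-invariant cycle on the free locus $X=S^l-S$ is literally a sum of orbits of prime cycles and hence is the pullback of a unique cycle on $Y$. There is no such descent for Borel--Moore classes: for the degree-$l$ \'etale cover $\pi\colon X\to Y$ the pullback $\pi^*\colon H_*^{BM}(Y,\Z)\to H_*^{BM}(X,\Z)^{\Z/l}$ has kernel and cokernel that are $l$-torsion but in general nonzero, and the operations you list (exterior product, $\pi_*$, cap with $c_1$) only produce $\pi_*(x^{\otimes l}|_X)=l\gamma^H(x)$, i.e.\ $l$ times the class you need. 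Defining $\gamma^H(x)$ itself requires the equivariant (Steenrod) construction: $x^{\otimes l}$ admits a canonical lift to $\Z/l$-equivariant Borel--Moore homology of $X$, which for the free action on $X$ is identified with $H_*^{BM}(Y,\Z)$. Relatedly, the sum formula (the third defining relation of $F_l$) and the compatibility $cl\circ\gamma=\gamma^H\circ cl$ are not formal consequences of naturality under products, push-forward and cap product; they are the actual content of Totaro's Sections 7--8. Finally, Step 2 claims more than is true: $\Phi$ need not be split injective on all of $F_l(H_*^{BM}(S,\Z))$, because of the $\Tor$ terms in the K\"unneth formula and odd-degree torsion in $H_*^{BM}(S,\Z)$; what is actually proved is split injectivity of the composite out of $F_l(A_*(S))$, using the hypothesis on $cl_S$ from the start rather than only at the end.
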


Here $ H_*^{BM}(S,\Z)$ denotes Borel-Moore homology (\cite[Section 19.1]{Fu}). Let us explain how the two lemmas above are related.
Choose an isomorphism $\bar{\Q}_p \cong \mathbb{C}$. By the comparison theorem (\cite[XI]{SGA4}) we have 
$$
H^{<2r}(Y,\Z_l) = H_{sing}^{<2r}(Y,\Z) \otimes \Z_l
                = H^{\text{BM}}_{>2l\dim S-2r}(Y,\Z) \otimes \Z_l.
$$
Note that $2l\dim S -2r \geq 2 \dim S$ and therefore 
$$H^{\text{BM}}_{>2l\dim S-2r}(Y,\Z)=H^{\text{BM}}_{>2l\dim S-2r}(Z^lS,\Z).$$
Under this identifications Lemma \ref{LeSplitInjective} follows from Lemma \ref{LeTo} and its proof, using that elements of a certain degree
are not affected by elements of larger degrees.

\begin{proof}(of Proposition \ref{PropWP})
 Fix $r_o \in \mathbb{N}$. Let $S \to \Z_p$ be a smooth approximation of $BG_{\Z_p}$ up to codimension $r_o$.
 We then have $A^r(S_{\bar{\Q}_p})=A^r_{G_{\bar{\Q}_p}}$ as well as 
 $A^r_{\Z/l \wr G_{\bar{\Q}_p}}=A^r((S^l_{\bar{\Q}_p}-S_{\bar{\Q}_p})/\Z/l)$ for all $r < r_o$ 
 and similary over ${\bar{\F}_p}$. Consider the diagram
 $$
 \xymatrix{
 F_l^{< r_o}(S_{\bar{\Q}_p}) \ar@{->>}[r]^{\Psi_{\bar{\Q}_p}} \ar[d]_{\cong} & A^{< r_o}_{\Z/l \wr G_{\bar{\Q}_p}} \ar[d]  \\
 F_l^{< r_o}(S_{\bar{\F}_p}) \ar@{->>}[r]^{\Psi_{\bar{\F}_p}} & A^{< r_o}_{\Z/l \wr G_{\bar{\F}_p}} 
 }
 $$
 where the left vertical arrow is induced by the specialization map for $G$. It is an isomorphism by condition (i). 
 The surjectivity of the horizontal maps follow from \cite[Lemma 8.1 (2)]{To}, whose proof is valid over an arbitrary field. We can apply \cite[Lemma 8.1 (2)]{To}
 since condition (ii) holds for $G$. The diagram commutes by Lemma \ref{LeCommutative}. Adding the cycle maps to etale cohomology we obtain a commutative diagram
 $$
 \xymatrix{
 F_l^{< r_o}(S_{\bar{\Q}_p}) \otimes \Z_l \ar@{->>}[r] \ar[d]_{\cong} & A^{< r_o}_{\Z/l \wr G_{\bar{\Q}_p}} \otimes \Z_l \ar[d] \ar[r] 
 & H^{< 2r_o}(B(\Z/l \wr G_{\bar{\Q}_p}),\Z_l) \ar[d]^{\cong} \\
 F_l^{< r_o}(S_{\bar{\F}_p}) \otimes \Z_l \ar@{->>}[r] & A^{< r_o}_{\Z/l \wr G_{\bar{\F}_p}} \otimes \Z_l  \ar[r] & H^{< 2r_o}(B(\Z/l \wr G_{\bar{\F}_p}),\Z_l).
 }
 $$
 Here the right vertical map is an isomorphism by Corollary \ref{CorSpezCoh}.
 Since condition (iii) holds for $G$, it follows from Lemma \ref{LeSplitInjective} that the composition
 $$
  \xymatrix{
 F_l^{< r_o}(S_{\bar{\Q}_p}) \otimes \Z_l \ar@{->>}[r] & A^{< r_o}_{\Z/l \wr G_{\bar{\Q}_p}} \otimes \Z_l \ar[r] & H^{< 2r_o}(B(\Z/l \wr G_{\bar{\Q}_p}),\Z_l) 
 }
 $$
 is split injective, using that the target is equal to $H^{< 2r_o}((S^l_{\bar{\Q}_p}-S_{\bar{\Q}_p})/\Z/l,\Z_l)$ by definition.
 This shows that condition (i) and (iii) hold again for $\Z/l \wr G$. The fact that condition (ii) also holds for $\Z/l \wr G$ is proven in \cite[Lemma 8.1 (2)]{To}.
 \end{proof}
 
 \section{Spezialization for some Chevalley Groups}
 
 \begin{Proposition}
\label{PropSpezCl}
\begin{equivlist}
 \item The specialization map $A^*BG_{\C} \to A^*BG_{\bar{\F}_p}$ for the classical groups $G=\GL_n(\F_q)$, $\Sp_{2m}(\F_q)$, $\Oo_n(\F_q)$ and 
 $\SO_n(\F_q)$ over the finite field $\F_q$ of characteristic $p$ become injective after inverting $2p$.
 \item If $q \equiv 1$ mod $4$ the specialization map for $\GL_n(\F_q)$, $\Sp_{2m}(\F_q)$ and $\Oo_{2m+1}(\F_q)$ become injective after inverting $p$.
 \item If $S$ denotes the product of $p$ and all prime divisor of $q-1$ the specialization map for $\SL_n(\F_q)$ becomes injective after inverting $S$.
\end{equivlist}
\end{Proposition}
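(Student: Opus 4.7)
The plan is to reduce to Sylow subgroups via Proposition \ref{PropSpez}(ii) and then invoke Proposition \ref{PropWP} inductively, using that the relevant Sylow subgroups are, by a theorem of Weir, products of iterated wreath products of the form $(\Z/l)^{\wr k}\wr(\Z/l^b)$. Concretely, for each of (i)--(iii) I would list the primes $l\neq p$ that are not inverted (odd $l\neq p$ for (i); all $l\neq p$ under the congruence condition in (ii); and primes $l\nmid q-1$ with $l\neq p$ in (iii)), and then verify for each such $l$ that the $l$-Sylow subgroup of $G(\F_q)$ is of the Weir shape. The hypothesis $q\equiv 1\bmod 4$ in (ii) and on $S$ in (iii) is exactly what ensures that no ``exotic'' Sylow subgroups (of semidihedral or quasidihedral type, for instance at $l=2$ when $q\equiv 3\bmod 4$) appear, so that Weir's description applies uniformly.

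Having reduced to a single group $H=(\Z/l)^{\wr k}\wr(\Z/l^b)$ (or a product of such), I would prove by induction on $k$ that $H$ satisfies the three hypotheses of Proposition \ref{PropWP}. For the base case $H_0=\Z/l^b$, the Chow ring $A^*_{H_{0,k'}}=\Z[c_1]/(l^b c_1)$ is computed by Lemma \ref{LeCyclemapmu}, the specialization $A^*_{H_{0,\bar\Q_p}}\otimes\Z_l\to A^*_{H_{0,\bar\F_p}}\otimes\Z_l$ is visibly an isomorphism (both sides are $\Z_l[c_1]/(l^b c_1)$ with $c_1$ the Brauer-lifted Chern class of the tautological character), approximations by complements of the zero section in line bundles on projective space admit cell decompositions, and the cycle map into $H^*(B H_{0,\bar\Q_p},\Z_l)$ is split injective by Lemma \ref{LeCyclemapmu} combined with the comparison isomorphism. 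The inductive step is precisely Proposition \ref{PropWP}. For products of such groups one multiplies the three conditions together: the Chow ring specialization is compatible with exterior products and the cycle map is multiplicative, so one finishes by the K\"unneth formula (Proposition \ref{PropEtaleHomology}(v)) and by taking products of cell decompositions.

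Once the three conditions hold for each relevant Sylow $l$-subgroup $P$, the specialization map $A^*_{P_\C}\otimes\Z_l\to A^*_{P_{\bar\F_p}}\otimes\Z_l$ is in particular an isomorphism (condition (i) of Proposition \ref{PropWP}), hence certainly injective. Then the commutative square
\[
\xymatrix{
(A^*_{G_\C})_{(l)} \ar[r]^{\res} \ar[d] & (A^*_{P_\C})_{(l)} \ar[d]^{\cong} \\
(A^*_{G_{\bar\F_p}})_{(l)} \ar[r]^{\res} & (A^*_{P_{\bar\F_p}})_{(l)}
}
\]
from the proof of Proposition \ref{PropSpez}(ii) has injective horizontal arrows (by the transfer-restriction argument with index prime to $l$), so the left vertical specialization map is injective after localising at $l$. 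Localising simultaneously at every prime $l$ outside the inverted set gives the claimed injectivity statements.

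The main obstacle I expect is not the induction itself but the careful bookkeeping of Weir's Sylow theorems across the four classical families $\GL_n,\Sp_{2m},\Oo_n,\SO_n$ and the three congruence hypotheses, in order to confirm that in each admissible case the Sylow subgroups are genuinely products of groups of the form $(\Z/l)^{\wr k}\wr(\Z/l^b)$ rather than the more delicate $2$-groups that occur when $q\equiv 3\bmod 4$ (or when $\SO_{2m}$ is present at $l=2$). Once this case analysis is in place, the cohomological machinery of Proposition \ref{PropWP} does all the work.
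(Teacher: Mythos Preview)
Your proposal is correct and follows essentially the same route as the paper: reduce to Sylow subgroups via Proposition~\ref{PropSpez}(ii), identify the relevant $l$-Sylow subgroups as products of iterated wreath products $(\Z/l)^{\wr k}\wr(\Z/l^b)$, verify the base case $\Z/l^b$ directly, and then apply Proposition~\ref{PropWP} inductively together with a K\"unneth argument for products.

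Two small points of precision where the paper is more explicit than your outline. First, Weir's paper \cite{We} only treats primes $l\neq 2,p$; for the $2$-Sylow subgroups of $\GL_n(\F_q)$, $\Sp_{2m}(\F_q)$ and $\Oo_{2m+1}(\F_q)$ under the hypothesis $q\equiv 1\bmod 4$ the paper invokes an argument of Quillen \cite[Lemma~13]{Qu}, and for $\SL_n(\F_q)$ with $l\nmid(q-1)$ it observes that any $l$-Sylow of $\SL_n(\F_q)$ is already an $l$-Sylow of $\GL_n(\F_q)$. You anticipated this bookkeeping as the main obstacle, and that is exactly where the content lies. Second, the K\"unneth formula needed to pass to products is the one for Chow rings of classifying spaces (the paper cites \cite[Lemma~2.12]{To2}), not Proposition~\ref{PropEtaleHomology}(v), which is the \'etale homology K\"unneth; the latter would only help with condition~(iii) of Proposition~\ref{PropWP}, whereas what is actually used is that the specialization map is an isomorphism on each factor and hence on the tensor product.
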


\begin{proof}
We need to check that the specialization map of the respective $l$-Sylow subgroups are injective, where in (i) we consider a prime $l$ not dividing $2p$, in (ii) 
we consider a prime $l$ different from $p$ and in (iii) we consider a prime $l$ not dividing $S$. 

We claim that in any case these $l$-Sylow subgroups are products of iterated wreath products of the form $\mathbb{Z}/l^{\wr i} \wr \mathbb{Z}/l^b$.
In \cite{We} Weir proves the assertion in the case $l \neq 2,p$ for the groups $\GL_n(\F_q)$, $\Sp_{2m}(\F_q)$, $\Oo_n(\F_q)$ and $\SO_n(\F_q)$. 
Using an argument of Quillen used in\cite[Lemma 13]{Qu} one easily sees that the same  is true for the $2$-Sylow subgroups of $\GL_n(\F_q)$, $\Sp_{2m}(\F_q)$ and 
$\Oo_{2m+1}(\F_q)$ provided $q \equiv 1$ mod $4$. Finally if $l$ is a prime not dividing $S$ then every $l$-Sylow subgroup of $\SL_n(\F_q)$ is also an 
$l$-Sylow subgroup of $\GL_n(\F_q)$. 

Let us check that conditions (i)-(iii) of Proposition \ref{PropWP} hold for $G=\Z/l^b$.
Choosing an $l^b$-th root of unity we may identify $(\Z/l^b)_k = \mu_{l^b,k}$ for $k=\bar{\Q}_p, \bar{\F}_p$. Note that reduction induces an 
isomorphism $\mu_{l^b}(\bar{\Q}_p)=\mu_{l^b}(\bar{\F}_p)$.
It follows from the Kummer sequence and \cite[Lemma 2.4]{To2}] that $A^*_{\mu_n}=\Z[t]/(nt)$, where $t$ is the first chern class of the character 
$\mu_n \hookrightarrow \G_m$. In particular $A^*_{\mu_n}$ is independent of the base field. Hence (i) holds for $G=\Z/l^b$.
Condition (iii) is an immediate consequence of Lemma \ref{LeCyclemapmu}.
We know that the complement of the zero section in $\mathcal{O}_{\Pbb^m}(l^b)$ approximates $B(\mu_{l^b})$ (cf. the proof of Lemma \ref{LeCyclemapmu}). 
This space can be cut open into spaces of the form $(\mathbb{A}^1-\{0\}) \times \mathbb{A}^k$. Hence condition (ii) holds.
Then using Proposition \ref{PropWP} we see that the specialization map for the group $\mathbb{Z}/l\mathbb{Z}^{\wr i} \wr \mathbb{Z}/l^b \mathbb{Z}$ is an 
isomorphism. The general case follows from the Künneth formula (\cite[Lemma 2.12]{To2}).
\end{proof}

\begin{Remark}
A $p$-Sylow subgroup $U$ of $\GL_n(\mathbb{F}_q)$ is given by the upper triangular matrices with $1$'s on the diagonal. The specialization morphism for $U$
is not injective. For $n=2$ this Sylow subgroup is just $(\mathbb{Z}/p\mathbb{Z})^a$ with $a=v_p(q)$ and its Chow ring in characteristic $p$ is trivial 
(as follows from the Artin Schreier sequence) while in characteristic $0$ it is $\mathbb{Z}[t_1,\ldots,t_a]/(pt_1,\ldots,pt_a)$.
\end{Remark}

\begin{Remark}
It would be interesting to know if, the specialization map $\sigma_G \colon A^*BG_{\C} \to A^*BG_{\bar{\F}_p}$ is always injective or even an isomorphism after 
inverting $p$ for any finite abstract group $G$. One could also ask if $A^*BG_{\bar{\F}_p}$ is $p$-torsion free, so that roughly
speaking $A^*BG_{\bar{\F}_p}$ equals $A^*BG_{\C}$ after taking away its $p$-torsion part.
\end{Remark}

\begin{proof}(of Theorem A.)
If $G$ is one of the groups $\GL_n$, $\Sp_{2m}$ or $\SL_n$,
Corollary \ref{Cor1} shows that the Chow ring of $BG(\F_q)_k$ is generated by Chern classes of the canonical representations of $G(\F_q)_k$, where $k$ is any field
containing $\F_q$.
The theorem thus follows from Proposition \ref{PropSpezCl} and Proposition \ref{PropSpez} (i).
\end{proof}

University of Paderborn, D-33098 Paderborn 

Dennis.Brokemper@math.uni-paderborn.de


\begin{thebibliography}{einruecktiefe}

\bibitem[Br]{Br}
 M. Brion, Equivariant Chow Groups for Torus Actions, Transformation Groups, September 1997, Volume 2, 225-267.

 \bibitem[EG]{EG} 
 D. Edidin, W. Graham, Characteristic Classes in the Chow ring, J. Algebraic Geom. 6 (1997), 431-443.
 
\bibitem[EG2]{EG2} 
 D. Edidin, W. Graham, Equivariant intersection theory, Invent. Math. 131 (1998), 595-634.
 
\bibitem[Fu]{Fu}
 W. Fulton, Intersection Theory, Springer Verlag, Second Edition 1998.
 
\bibitem[Gu]{Gu}
P. Guillot, Chow rings and cobordims of some Chevalley groups, Math. Proc. Cambridge Philos. Soc. 136 (2004), 625-642.
 
\bibitem[KW]{KW}
R. Kiehl, R. Weissauer, Weil conjectures, perverse sheaves and l'adic Fourier transform, Springer Verlag 2001.
 
\bibitem[La]{La}
 G. Laumon, Homologie \'{e}tale, Ast\'{e}risque 36-37, 163-188 (1976).

\bibitem[LMB]{LMB}
 G. Laumon, L. Moret-Bailly, Champs alg\'{e}briques, Springer Verlag, 2000.
 
\bibitem[Ma]{Ma}
 H. Matsumura, Commutative algebra, Mathematics lecture note series (38), New York : Benjamin, 1970.
 
\bibitem[Qu]{Qu}
 D. Quillen, On the cohomology and K-Theory of the general linear groups over a finite field,  Ann. of Math. (2) 96 (1972), 552-586.
 
\bibitem[Se]{Se} 
 J.-P. Serre, Espaces fibres alg\'{e}briques, Séminaire Claude Chevalley, tome 3 (1958), exp. no 1, 1-37.
 
 \bibitem[Se2]{Se2}
 J.-P. Serre, Linear Representations of Finite Groups, Springer Verlag, 1977.
  
 \bibitem[SGA4]{SGA4} 
 M. Artin, A. Grothendieck, J.-L. Verdier, Séminaire de Géométrie Algébrique du Bois Marie - Théorie des topos et cohomologie étale des schémas - 
 (SGA 4) - vol. 1-3. Lecture notes in mathematics 269, Springer 1972.
 
\bibitem[SGA6]{SGA6} 
 P. Berthelot, A. Grothendieck, L. Illusie, Séminaire de Géométrie Algébrique du Bois Marie - Théorie des intersections et théorème de Riemann-Roch - (SGA 6) - 
 Lecture notes in mathematics 225, Springer 1971. 
 
\bibitem[St]{St} 
 R. Steinberg, Endomorphisms of linear algebraic groups, Memoirs of the American Mathematical Society, No. 80 American Mathematical Society, Providence, R.I. 1968, 108 pp.
 
\bibitem[To]{To}
 B. Totaro, The Chow ring of a classifying space, Algebraic K-theory (Seattle, WA, 1997), Amer. Math. Soc., Providence, RI, 1999, 249-281.

\bibitem[To2]{To2}
 B. Totaro, Group cohomology and algebraic cycles, Cambridge University Press, 2014. 

\bibitem[We]{We}
 A. J. Weir, Sylow $p$-subgroups of the classical groups over finite fields with characteristic prime to $p$, Proc. Amer. Math. Soc. 6, 529-533 (1955).
\end{thebibliography}
\end{document}